\def\R{\mathbb{R}}
\def\Q{\mathbb{Q}}
\def\Z{\mathbb{Z}}
\def\N{\mathbb{N}}
\def\O{\mathcal{O}}
\def\nm{\lVert\cdot\rVert}
\def\deg{{\mathrm{deg}}}
\def\vol{\widehat{\mathrm{vol}}}
\def\adlcv{(K,(\Omega, \mathcal A,\nu),\phi)}
\def\Div{\mathrm{Div}}
\def\mumin{\widehat{\mu}_{\min}}
\def\mumax{\widehat\mu_{\max}}
\def\ot{\otimes}
\def\ardeg{\widehat{\mathrm{deg}}}
\def\ovl{\overline}
\def\quot{\mathrm{quot}}
\def\xanomega{X^{\mathrm{an}}_\omega}
\definecolor{mred}{rgb}{0.83, 0.0, 0.0}
\definecolor{darkspringgreen}{rgb}{0.09, 0.45, 0.27}
\definecolor{ruby}{rgb}{0.88, 0.07, 0.37}
\def\colorsout#1{\bgroup\markoverwith{\textcolor{#1}{\rule[0.5ex]{2pt}{0.7pt}}}\ULon} %[0.5ex]{2pt}{0.4pt}
\def\coloruline#1{\bgroup\markoverwith{\textcolor{#1}{\rule[-0.5ex]{2pt}{0.7pt}}}\ULon} %[0.5ex]{2pt}{0.4pt}
\newcommand\tint{\mathop{\mathpalette\tb@int{t}}\!\int}
\newcommand\bint{\mathop{\mathpalette\tb@int{b}}\!\int}
\newcommand\tb@int[2]{%
  \sbox\z@{$\m@th#1\int$}%
  \if#2t%
    \rlap{\hbox to\wd\z@{%
      \hfil
      \vrule width .35em height \dimexpr\ht\z@+1.4pt\relax depth -\dimexpr\ht\z@+1pt\relax
      \kern.05em % a small correction on the top
    }}
  \else
    \rlap{\hbox to\wd\z@{%
      \vrule width .35em height -\dimexpr\dp\z@+1pt\relax depth \dimexpr\dp\z@+1.4pt\relax
      \hfil
    }}
  \fi
}
\newcommand*\suppresschapternumber{%
  \let\@makechapterhead\@makeschapterhead
  \patchcmd{\@chapter}
    {\protect\numberline{\thechapter}}
    {}
    {}{}%
}
\newcommand*\removedotbetweenchapterandsection{%
  \renewcommand\thesection{\thechapter\@arabic\c@section}%
}
\newtheorem{theorem}{Theorem}
\title{Continuous extension and Birational invariance of $\chi$-volume over an adelic curve}
\author{Wenbin LUO\thanks{
This work was supported by JSPS KAKENHI Grant Number JP20J20125.}}
\email{w.luo@math.kyoto-u.ac.jp}
\address{Department of Mathematics, Graduate School of Science, Kyoto university, Kyoto 606-8502, Japan}
\date{\today}
\begin{document}

\maketitle
\begin{abstract}
By giving an estimate on the minimal slopes, we prove a Hilbert-Samuel formula for semiample and semipositive adelic line bundles. We also show the birational invariance of the arithmetic $\chi$-volume and its continuous extension on the semiample cone.
\end{abstract}
\tableofcontents
\section*{Introduction}
\subsection*{Backgrounds}
Arakelov geometry is a theory to study varieties over $\mathcal O_K$ where $K$ is a number field. Since the closed points of $\mathrm{Spec}\mathcal O_K$ only give rise to non-Archimedean places of $K$, the main idea of Arakelov Geometry is to "compactify" $\mathrm{Spec}\mathcal O_K$ by adding Archimedean places, and the corresponding "fibers" are nothing but analytification of the generic fiber. In order to generalize the theory to the case over a function field or even more general cases, Chen and Moriwaki established the theory of Arakelov geometry over adelic curves\cite{adelic}. Here an adelic curve is a field equipped with a set of absolute values parametrised by a measure space. Such a structure can be easily constructed for any global fields by using the theory of adèles. Moreover, we can define an adelic curve for a finitely generated field over $\Q$ over which Moriwaki introduced the height function in \cite{moriwaki2000arithmetic}. 

The theory over adelic curves thus provide a unified way to consider arithmetics over these fields. The first step is to give a theory of geometry of numbers. Inspired by the study of semistability of vector bundles over projective regular curves, the arithmetic slope theory was introduced in \cite{adelic}, which was established over number fields in \cite{BOST2013}. 

In this paper, we mainly discuss the so-called $\chi$-volume which is the leading coefficient of the arithmetic Hilbert-Samuel function over adelic curves, including its birational invariance and continuous extension on the semiample cone. In the case over number fields, the $\chi$-volume was studied in \cite{moriwaki2009convol,ikoma_boundedness,Yuan_2008}. We has gave some properties of $\chi$-volume over function fields in previous paper \cite{luo2021relative,luo2022slope}.

\subsection*{Arakelov geometry over adelic curves}
Let $\adlcv$ be a proper adelic curve, where $\phi$ is a map from $\Omega$ to the set $M_K$ of all absolute values on $K$. The properness of an adelic curve is equivalent to say it satisfies a product formula (see \ref{subsect_def_adlcv}). For each $\omega\in \Omega$, we denote by $K_\omega$ the completion of $K$ with respect to the corresponding absolute value. Let $E$ be a vector space over $K$ of dimension $n$. Let $\xi=\{\nm_{\omega}\}$ be a norm family where each $\nm_\omega$ is a norm on $E_{K_\omega}:=E\otimes_K K_\omega$. 
We say the pair $\overline E=(E,\xi)$ is an \textit{adelic vector bundle} if $\xi$ satisfies certain dominancy and measurability properties\cite[4.1.4]{adelic}. If $E\not=0$, then we define its \textit{Arakelov degree} as
$$\ardeg(\overline E):=-\int_\omega\ln \lVert s\rVert_{\omega,\mathrm{det}} \nu(d\omega)$$
where $s\in \mathrm{det}E\setminus \{0\}$ and each $\nm_{\omega,\mathrm{det}}$ is the determinant of $\nm_{\omega}$.

Let $X$ be a $K$-projective scheme of dimension $d$. For each $\omega\in\Omega$, we denote by $X^{\mathrm{an}}_\omega$ the analytification of $X_\omega:=X\times_{\mathrm{Spec}K}\mathrm{Spec}K_\omega$ with respect to $\lvert\cdot\rvert_\omega$ in the sense of Berkovich. Consider a pair $\overline L=(L,\{\varphi_\omega\}_{\omega\in \Omega})$ such that $L$ is a line bundle on $X$ and each $\varphi_\omega$ is a metric of the analytification of $L$ on $X^{\mathrm{an}}_\omega$. If the metric family is dominated and measurable\cite[6.1]{adelic}, we say $\ovl L$ is an adelic line bundle.

We assume that $X$ is geometrically reduced. We denote by $\nm_{\varphi_\omega}$ the supnorm on $H^0(X,L)\ot K_\omega$, and by $\xi_{\varphi}$ the norm family $\{\nm_{\varphi_\omega}\}$. 
Now we assume that either the $\sigma$-algebra $\mathcal A$ is discrete, or the field $K$ admits a countable subfield which is dense in every $K_\omega$ with respect to $\lvert\cdot\rvert_\omega$ for every $\omega\in\Omega$. Then $\pi_*(\ovl L):=(H^0(X,L),\xi_\varphi)$ is an adelic vector bundle \cite[Theorem 6.2.18]{adelic}. Hence we can define the $\chi$-volume function by
$$\vol_\chi(\overline L):=\limsup_{n\rightarrow +\infty}\frac{\ardeg(\pi_*(n\ovl L))}{n^{d+1}/(d+1)!}.$$
The superior limit in the definition can be replaced by limit when the asymoptotic minimal slope $\displaystyle\mumin^{\inf}(\ovl L)=\liminf\limits_{n\rightarrow +\infty}\frac{\mumin(\pi_*(n\ovl L))}{n}$ is finite. We refer the reader to \ref{subsec_adl_vb} for the definition of minimal slopes. 

\subsection*{Minimal slopes and Arithmetics over trivially valued fields} An $\R$-filtration $\mathcal F^t$ of a vector spaces $E$ gives an ultrametic norm $\lVert\cdot\rVert_{\mathcal F}$ over the trivial absolute value. In particular, the minimal slope of an utrametically normed space $(E,\lVert\cdot\rVert_{\mathcal F})$ can be given by the supremum of norms. This allow us to give the estimate of minimal slope associated to the an ultrametrically normed linear series over a trivially valued field. In conclusion, we prove that for adelic line bundles $\ovl L_1=(L_1,\phi_1),\cdots,\ovl L_r=(L_r,\phi_r)$ such that $L_1,\dots,L_r$ are semiample, there exists constants $S$, $T$ such that
$$\mumin(\pi_*(a_1 \ovl L_1+\cdots+a_r\ovl L_r))\geq S(a_1+\cdots +a_r)+T$$
for $a_1,\dots,a_r\in\N$.

Based on this result, we prove a arithmetic Hilbert-Samuel formula for semiample and semipositive adelic line bundle:

\begin{theorem}[cf. Theorem \ref{theo_semi_hil_sam}]
    Let $\ovl L$ be a semiample and semipositive adelic line bundle. 
    It holds that $$\vol_\chi(\ovl L)=\widehat c_1(\ovl L)^{d+1}$$
\end{theorem}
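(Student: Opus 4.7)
The plan is to deduce the formula from the minimal slope estimate of the preceding subsection via a two-step reduction: first upgrade the $\limsup$ to a genuine limit and get two-sided control on the average slope, then approximate $\ovl L$ by ample adelic line bundles with smooth positive metrics, for which the formula is classical, and close the argument by a continuity statement that draws its strength from the uniform slope bound.

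The first observation is that the announced slope estimate gives $\mumin(\pi_*(n\ovl L)) \geq Sn + T$, hence $\mumin^{\inf}(\ovl L) \geq S > -\infty$. By the remark following the definition of $\vol_\chi$, this already converts the $\limsup$ into a genuine limit. Moreover, the average slope $\ardeg(\pi_*(n\ovl L))/\rank(\pi_*(n\ovl L))$ is now trapped between a linear function of $n$ from below and $\mumax(\pi_*(n\ovl L))$ from above, the latter being controlled asymptotically since $\ovl L$ is semiample. Equivalently, the rescaled Harder--Narasimhan measures on $\pi_*(n\ovl L)$ are tight, and $\vol_\chi(\ovl L)$ can be read off as the limit of their first moments, weighted by the geometric volume.

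Next, I would choose an ample adelic line bundle $\ovl A$ equipped with a smooth positive metric and set $\ovl L_t := \ovl L + t\ovl A$ for $t \in \Q_{>0}$. Each $\ovl L_t$ is ample with semipositive metric, so the arithmetic Hilbert--Samuel identity in this favourable case, which is already available in the adelic framework via the references cited in the introduction, yields $\vol_\chi(\ovl L_t) = \widehat c_1(\ovl L_t)^{d+1}$. The right-hand side is a polynomial in $t$ whose value at $t = 0$ is precisely $\widehat c_1(\ovl L)^{d+1}$, so the theorem reduces to the continuity $\vol_\chi(\ovl L_t) \to \vol_\chi(\ovl L)$ as $t \to 0^+$.

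This continuity is the heart of the argument and the principal obstacle. Applied to the pair $(\ovl L, \ovl A)$, the multi-parameter form of the slope estimate provides a uniform lower bound
$$\mumin(\pi_*(n\ovl L + m\ovl A)) \geq S(n+m) + T,$$
with constants $S, T$ independent of $(n, m) \in \N^2$. This uniformity is exactly what prevents mass from escaping to $-\infty$ in the rescaled Harder--Narasimhan measures as $t \to 0^+$, and combined with standard upper bounds on $\mumax^{\sup}(\ovl L_t)$ and the geometric volume convergence $\mathrm{vol}(L_t) \to \mathrm{vol}(L)$, it forces convergence of the first moments. Promoting this tightness-plus-upper-bound package to actual convergence of first moments, rather than mere weak subconvergence of the rescaled measures, is where the bulk of the technical work must be concentrated.
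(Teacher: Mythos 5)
Your overall skeleton (perturb by an ample semipositive $\ovl A$, apply the known ample-case Hilbert--Samuel to $\ovl L+t\ovl A$, and pass to the limit $t\to 0^+$) is a reasonable reduction, but as written it has a genuine gap: the continuity $\vol_\chi(\ovl L+t\ovl A)\to\vol_\chi(\ovl L)$ is exactly the hard point, and the argument you offer for it does not suffice. Tightness of the rescaled Harder--Narasimhan measures of $\pi_*(n(\ovl L+t\ovl A))$ for each fixed $t$, together with the uniform bound $\mumin(\pi_*(n\ovl L+m\ovl A))\geq S(n+m)+T$ and an upper bound on $\mumax$, controls each $\vol_\chi(\ovl L_t)$ individually, but it provides no mechanism for comparing the graded linear series of $\ovl L_t$ with that of $\ovl L$; without such a comparison nothing forces the first moments to converge to the first moment of $\ovl L$ as $t\to 0^+$. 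You would need either (a) an explicit comparison map between section spaces, or (b) to identify $\vol_\chi$ with the arithmetic volume $\vol$ after twisting by a function making $\mumin^{\sup}$ positive (Proposition \ref{prop_Okounkov}(4)) and then invoke the known continuity of $\vol$ --- which is in fact how the paper later proves continuity on the semiample cone (Lemma \ref{lem_conti_ext}, Theorem \ref{theo_chi_semiample_con}); you do neither, and you acknowledge the step is unproved.

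For contrast, the paper closes precisely this gap quantitatively rather than by a limiting continuity statement: it proves a comparison lemma (Lemma \ref{lemm_vol_ineq}) based on multiplication by a nonzero global section with nonnegative arithmetic degree, which gives norm-contracting injections $H^0(X,mL)\to H^0(X,m(L)+\cdots)$ and hence the inequality
\begin{equation*}
\vol_\chi(n\ovl L)\ \leq\ \widehat c_1(n\ovl L+\ovl A)^{d+1}-(d+1)\,\mumin^{\inf}(n\ovl L+\ovl A)\bigl(\mathrm{vol}(nL+A)-\mathrm{vol}(nL)\bigr),
\end{equation*}
where your uniform two-variable slope bound enters only to show that the error term is $o(n^{d+1})$ after dividing by $n^{d+1}$ (since $\mathrm{vol}(nL+A)-\mathrm{vol}(nL)=o(n^{d})$ and $\lvert\mumin^{\inf}(n\ovl L+\ovl A)\rvert=O(n)$); the reverse inequality is obtained symmetrically using a section of $rL-A$ in the big case (with the non-big case handled separately via Lemma \ref{lemm_pos_notbig}). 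So the uniform slope estimate you correctly identified is indeed the key input, but it must be fed into an explicit section-multiplication comparison (or into the $\vol_\chi=\vol$ twisting argument plus continuity of $\vol$); the measure-theoretic tightness heuristic alone does not yield the theorem.
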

Moreover, we show that $\vol_\chi(\cdot \ovl L)$ is invariant under a birational morphism $f:X'\rightarrow X$ if $\mumin^{\inf}(f^*\ovl L)\in\R$ without using Stein-factorization(see Theorem \ref{theo_bir_inv}).

\subsection*{Continuous extension of $\vol_\chi(\cdot)$}
Assume that $X$ is further geometrically integral and normal. By the correspondence between continuous metrics and Green functions, we can extend the definition of $\vol_\chi(\cdot)$ to $\R$-Cartier adelic divisors (see section \ref{sect_adl_R_div}). We show that $\displaystyle\frac{\vol_\chi(\ovl D)}{\mathrm{vol}(D)}$ is a concave function on the ample cone of a projective curve or a projective toric varieties. The following are some results we obtained about the continuity of $\vol_\chi(\cdot)$.
\begin{theorem}[cf. Theorem \ref{theo_chi_semiample_con}]
Let $\overline D_1=(D,g),\dots, \overline D_r=(D_r,g_r)$ be adelic $\Q$-Cartier divisors on $X$ such that $D_i$ are semiample. Then $\vol_\chi(\cdot)$ can be continuously extended to a function on the polyhedral cone $$\{a_1 \ovl D_1+\cdots +a_r \ovl D_r\mid (a_1,\cdots, a_r)\in \R_{\geq 0}^r\}.$$
\end{theorem}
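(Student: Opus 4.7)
The approach is to first show $\vol_\chi$ is well-defined and uniformly continuous on the $\mathbb{Q}$-rational points of the polyhedral cone $\sigma:=\{\sum a_i\overline{D}_i\mid a_i\in\R_{\geq 0}\}$, and then extend by density to all of $\sigma$. For $(a_1,\ldots,a_r)\in\mathbb{Q}_{\geq 0}^r$, clearing denominators makes $\sum a_i\overline{D}_i$ a rational scaling of a genuine adelic divisor, so $\vol_\chi$ is defined via homogeneity of degree $d+1$. The minimal slope estimate recalled in the introduction gives $\mumin^{\inf}(\sum a_i\overline{D}_i)\geq S(a_1+\cdots+a_r)>-\infty$, so the defining $\limsup$ is a genuine limit throughout $\sigma\cap\mathbb{Q}^r$, and $\vol_\chi$ is bounded below on any compact subset.

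The heart of the proof is a Lipschitz estimate on compact subsets $K\subset\R_{\geq 0}^r$ restricted to $\mathbb{Q}^r$. When the $\overline{D}_i$ are additionally semipositive, the Hilbert--Samuel theorem of the previous section yields $\vol_\chi(\sum a_i\overline{D}_i)=\widehat{c}_1(\sum a_i\overline{D}_i)^{d+1}$, which is a polynomial of degree $d+1$ in the $a_i$ and hence Lipschitz on $K$. For arbitrary continuous metrics, I fix a semipositive auxiliary adelic divisor $\overline{A}$ with $A$ ample so that each $\overline{D}_i+M\overline{A}$ is semipositive for sufficiently large $M$, and use the slope estimate together with the multilinear expansion of $\widehat{c}_1(\cdot)^{d+1}$ to transfer the polynomial structure on the semipositive side into a modulus of continuity for $\vol_\chi$ on $\sigma$. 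The birational invariance of $\vol_\chi$ proven earlier may be invoked to reduce, if necessary, to a normal projective model where such perturbations behave well.

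By uniform continuity on the dense subset $\sigma\cap\mathbb{Q}^r$, $\vol_\chi$ extends uniquely and continuously to the closure $\sigma$, proving the theorem. The main obstacle I anticipate is extracting the Lipschitz estimate when the metrics are not semipositive: the Hilbert--Samuel polynomial structure is only directly available in the semipositive setting, and the passage to general continuous metrics requires the minimal slope bound as the key technical input to control the perturbation error linearly in the coefficients $a_i$.
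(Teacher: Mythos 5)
There is a genuine gap, and it sits exactly where you flag it: the passage from the semipositive case to arbitrary continuous metrics. Your proposed fix --- choosing an ample semipositive $\ovl A$ so that each $\ovl D_i+M\ovl A$ becomes semipositive for $M\gg 0$ --- does not work in general. Semipositivity of the metric is a plurisubharmonicity-type condition on each $\xanomega$, and adding a large multiple of a semipositive metric does not make an arbitrary continuous metric semipositive; this is precisely why the paper has to introduce the class of \emph{integrable} adelic line bundles (differences of ample semipositive ones) as a proper subclass. For a general adelic $\Q$-Cartier divisor with semiample underlying divisor there is no such $M$, so the Hilbert--Samuel identity $\vol_\chi=\widehat c_1(\cdot)^{d+1}$ (which in Theorem \ref{theo_semi_hil_sam} requires semipositivity, not just semiampleness) is unavailable, and the polynomial/Lipschitz structure you want to ``transfer'' never gets off the ground. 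The remaining ingredients you list (homogeneity, density of $\Q$-points, the slope bound $\mumin^{\sup}(\sum a_i\ovl D_i)\geq S\sum a_i$ from Corollary \ref{coro_semiample_min_slop}) are all correct and are also used in the paper, but they do not by themselves produce a modulus of continuity for $\vol_\chi$.

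The idea you are missing is that one should not try to prove continuity of $\vol_\chi$ directly, but rather reduce it to the already-known continuity of the arithmetic volume $\vol(\cdot)$ (\cite[Theorem 6.4.24]{adelic}), which holds with no positivity hypothesis on the metrics. Concretely (Lemma \ref{lem_conti_ext}): pick a $\nu$-integrable $h$ with $\int_\Omega h\,\nu(d\omega)=1$ and a continuous $\lambda(\cdot)$ on the cone with $\mumin^{\sup}(\ovl D)>\lambda(\ovl D)$ at rational points --- here the semiampleness slope bound supplies $\lambda$. Twisting $\ovl D$ by $-\lambda(\ovl D)(0,h)$ makes $\mumin^{\sup}$ positive, so Proposition \ref{prop_Okounkov}(4) gives $\vol_\chi=\vol$ for the twisted divisor, and the exact formula for the effect of the twist on $\vol_\chi$ lets you write
\[
\vol_I(\ovl D):=\vol\bigl(\ovl D-\lambda(\ovl D)(0,h)\bigr)+(d+1)\mathrm{vol}(D)\lambda(\ovl D),
\]
which is manifestly continuous on the real cone and agrees with $\vol_\chi$ at rational points. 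This sidesteps both the semipositivity issue and the need for any uniform (Lipschitz) estimate on the rational points.
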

\subsection*{Organization of the paper}
We first give preliminaries on the theory of adelic curves, including Arakelov degrees and Harder-Narasimhan filtrations of adelic vector bundles. In section \ref{sect_arith_triv}, we use the ultrametric norm over a trivially valued field to study the fitered vector spaces and linear series. The results can be used to give descriptions of the $\chi$-volume. We deal with the case of adelic line bundles and adelic Cartier divisors respectively in section \ref{sec_adelic_lin} and \ref{sect_adl_R_div}.

\section{Preliminaries}
\subsection{Adelic curves}\label{subsect_def_adlcv}
\begin{defi}
Let $K$ be a field and $M_K$ be all its places. An \textit{adelic curve} is a 3-tuple $\adlcv$ where $(\Omega, \mathcal A, \nu)$ is a measure space consisting of the space $\Omega$, the $\sigma$-algebra $\mathcal A$ and the measure $\nu$, and $\phi$ is a function $(\omega\in\Omega)\mapsto \lvert\cdot\rvert_\omega\in M_K$ such that
$$\omega\mapsto \ln\lvert \alpha\rvert_\omega$$
is $\nu$-integrable for any non-zero element $\alpha$ of $K$. Moreover, if $\displaystyle\int_\Omega \ln\lvert \alpha\rvert_\omega\nu(d\omega)=0$ for any $\alpha\in K^\times$, then we say the adelic curve is \textit{proper}. For each $\omega\in\Omega$, we denote by $K_\omega$ the completion field of $K$ with respect to $\lvert\cdot\rvert_\omega$. We denote that $\Omega_\infty:=\{\omega\in\Omega\mid \lvert\cdot\rvert_\omega\text{ is Archimedean}\}.$
\end{defi}

Some typical examples are number fields, projective curves, polarised varieties. We refer to \cite[Section 3.2]{adelic} for detailed constructions. From now on, we assume the adelic curve is proper unless it's specified. Moreover, for technical reasons, we assume that $\adlcv$ satisfies any of the following two conditions:
\begin{enumerate}
    \item[\textnormal{(i)}] There exists a countable subfield $K'\subset K$ which is dense in every $K_\omega$.
    \item[\textnormal{(ii)}] The $\sigma$-algebra $\mathcal A$ is discrete.
    \item[\textnormal{(iii)}] For every $\omega\in\Omega_\infty$, $\lvert\cdot\rvert_\omega=\lvert\cdot\rvert$ on $\Q$ where $\lvert\cdot\rvert$ is the common absolute value.
\end{enumerate}
It is worth to be mentioned that the condition (iii) guarantees that $\nu(\Omega_\infty)<+\infty$ due to \cite[Proposition 3.1.2]{adelic}.
\subsection{Adelic vector bundles}\label{subsec_adl_vb}
Let $E$ be a vector space over $K$ of dimension $n$. Let $\xi=\{\nm_{\omega}\}$ be a norm family where each $\nm_\omega$ is a norm on $E_{K_\omega}:=E\otimes_K K_\omega$. We can easily define restriction $\xi_F$ of $\xi$ to a subspace $F$, quotient norm family $\xi_{E\twoheadrightarrow G}$ induced by a surjective homomorphism $E\twoheadrightarrow G$, the dual norm family $\xi^\vee$ on $E^\vee$, exterior power norm family and tensor product norm family. Please check \cite[Section 1.1 and 4.1]{adelic} for details.
\begin{defi}[Adelic vector bundles]
We say a norm family $\xi$ is \textit{upper dominated} if
$$\forall s\in E^*, \tint_\Omega \ln\lVert s\rVert_\omega\nu(d\omega)<+\infty.$$
Moreover, we say $\xi$ is \textit{dominated} if its dual norm $\xi^\vee$ on $E^\vee$ is also upper dominated. We say $\xi$ is \textit{measurable} if for any $s\in E^*$, the function $s\mapsto \lVert s\rVert_\omega$ is $\mathcal A$-measurable. If $\xi$ is both dominated and measurable, we say the pair $\overline E=(E,\xi)$ is an \textit{adelic vector bundle}. Note that the property of being an adelic vector bundle is well-preserved after taking restriction to subspaces, quotients, exterior powers and tensor products\cite[Proposition 4.1.32, Remark 4.1.34]{adelic}. It is worth noting that the $\epsilon,\pi$-tensor product $\ovl E_1\ot_{\epsilon,\pi}\ovl E_2$ of adelic vector bundles $\ovl E_1$ and $\ovl E_1$ is defined in the way that the norm on the tensor product are given differently for Archimedean and non-Archimedean places. For the details, we refer the reader to \cite[1.1.11 and 4.1.1.5]{adelic}
\end{defi}

\begin{defi}[Arakelov degrees and slopes]
Let $\overline E=(E,\xi)$ be an adelic vector bundle. If $E\not=0$, then we define its \textit{Arakelov degree} as
$$\deg(\overline E):=-\int_\omega\ln \lVert s\rVert_{\mathrm{det}\,\xi,\omega} \nu(d\omega)$$
where $s\in \mathrm{det}E\setminus \{0\}$ and $\mathrm{det}\,\xi$ is the determinant norm family on $\mathrm{det}E$ (the highest exterior power). Note that this definition is independent with the choice of $s$ since the adelic curve $S$ is proper. If $E=0$, then by convention we define that $\deg(\overline E)=0$.
Moreover, we define the \textit{positive degree} as
$$\deg_+(\overline E):=\sup_{F\subset E}\deg(F,\xi_F),$$
i.e. the supremum of all its adelic vector sub-bundles' Arakelov degerees.
If $E\not=0$, its \textit{slope} $\widehat{\mu}(\overline E)$ is defined to be the quotient $\deg(\overline E)/\dim_K(E)$.
The \textit{maximal slope} and \textit{minimal slope} are defined as
\begin{equation*}\label{def_min_slop}
    \begin{aligned}
    \mumax(\overline E)&:=\begin{cases}\sup\limits_{0\not=F\subset E}\widehat \mu(F,\xi_F), &\text{ if }E\not=0\\
    -\infty, &\text{ if }E=0 \end{cases}\\
    \mumin(\overline E)&:=\begin{cases}\inf\limits_{E\twoheadrightarrow G\not=0}\widehat \mu(G,\xi_{E\twoheadrightarrow G}), &\text{ if }E\not=0\\
    +\infty, &\text{ if }E=0 \end{cases}
    \end{aligned}
\end{equation*}

\begin{prop}\label{prop_deg_posdeg}
Let $\ovl E=(E,\{\nm_{\omega}\})$ be an adelic vector bundle. We give the following properties:
\begin{enumerate}
    \item[\textnormal{(a)}] Let $f:\Omega\rightarrow\R$ be an $\nu$-integrable function. Then $\ovl {E(f)}:=(E,\{\nm_\omega \exp{(-f(\omega)})\})$ is also an adelic vector bundle and \[\begin{cases}
    \displaystyle\ardeg(\ovl {E(f)})=\ardeg(\ovl E)+(\dim_K E)\int_{\Omega} f\nu(d\omega),\\
    \displaystyle\mumin(\ovl {E(f)})=\mumin(\ovl E)+\int_{\Omega} f\nu(d\omega).
    \end{cases}\]
    \item[\textnormal{(b)}] If $\mumin(\ovl E)\geq 0$, then $\ardeg(\ovl E)=\ardeg_+(\ovl E)$.
\end{enumerate}
\end{prop}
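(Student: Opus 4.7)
For part (a), the plan is to check directly that the twist is an adelic vector bundle and then to compute how the determinant and quotient norms rescale. Pointwise one has $\ln\lVert s\rVert_{\omega,\mathrm{new}} = \ln\lVert s\rVert_\omega - f(\omega)$, where $f$ is $\nu$-integrable and $\omega\mapsto \ln\lVert s\rVert_\omega$ is upper dominated and measurable; hence the twisted family is also upper dominated and measurable. Applying the same reasoning to the dual norm family (which is twisted by $+f$ instead) shows that $\ovl{E(f)}$ is again an adelic vector bundle. Next, the determinant norm family on $\det E$ of the twist equals $\lVert\cdot\rVert_{\omega,\det}\exp(-n f(\omega))$ with $n=\dim_K E$; picking any nonzero $s\in \det E$ and integrating $-\ln$ yields $\ardeg(\ovl{E(f)})=\ardeg(\ovl E)+n\int_\Omega f\,\nu(d\omega)$. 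Finally, for any nonzero quotient $E\twoheadrightarrow G$ of dimension $m$, the induced quotient norm family is obtained by the same rescaling $\exp(-f)$ applied to $\xi_{E\twoheadrightarrow G}$ (since the infimum over preimages commutes with a positive scalar), so the slope of that quotient shifts by $\int f\,\nu(d\omega)$; taking the infimum over nonzero quotients gives the formula for $\mumin(\ovl{E(f)})$.

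For part (b), the inequality $\ardeg_+(\ovl E)\geq \ardeg(\ovl E)$ is trivial, taking the subspace $F=E$. For the reverse, I would invoke the standard degree inequality for a short exact sequence of adelic vector bundles, namely
\[
\ardeg(\ovl E)\;\geq\;\ardeg(F,\xi_F)+\ardeg(E/F,\xi_{E\twoheadrightarrow E/F}),
\]
which reflects the pointwise comparison $\lVert s\rVert_{\omega,\det E}\leq \lVert s_F\rVert_{\omega,\det F}\cdot\lVert \bar s\rVert_{\omega,\det(E/F)}$ under the canonical isomorphism $\det E\simeq \det F\otimes \det(E/F)$, valid both in the Archimedean and ultrametric settings, integrated against $\nu$. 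For any proper $F\subsetneq E$, the quotient $E/F$ is nonzero, so $\widehat\mu(E/F,\xi_{E\twoheadrightarrow E/F})\geq \mumin(\ovl E)\geq 0$, hence $\ardeg(E/F,\xi_{E\twoheadrightarrow E/F})\geq 0$; the displayed inequality then gives $\ardeg(F,\xi_F)\leq \ardeg(\ovl E)$. The case $F=0$ is handled by the convention $\ardeg(0)=0$ together with $\ardeg(\ovl E)=\dim_K(E)\cdot\widehat\mu(\ovl E)\geq \dim_K(E)\cdot\mumin(\ovl E)\geq 0$. Taking the supremum over $F$ yields $\ardeg_+(\ovl E)\leq \ardeg(\ovl E)$.

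I do not expect a real obstacle here: part (a) is a bookkeeping computation using only the defining integrability of $f$, and the one genuinely used input in part (b) is the exact-sequence degree inequality, whose direction must be checked carefully in the adelic-curve setting (the subtlety being the comparison between the determinant norm on $E$ and the product of determinant norms under the tensor decomposition). Once that inequality is cited from the relevant sections of \cite{adelic}, the proposition follows immediately.
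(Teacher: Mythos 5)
Your proposal is correct and follows essentially the same route as the paper: part (a) is the same direct bookkeeping the paper dismisses as ``due to the definition,'' and part (b) is the contrapositive form of the paper's one-line argument, using the same sub/quotient degree inequality (Proposition \ref{prop_exact}, i.e.\ \cite[Proposition 4.3.12]{adelic}) together with $\widehat\mu(E/F,\xi_{E\twoheadrightarrow E/F})\geq\mumin(\ovl E)\geq 0$ for every proper subspace $F$.
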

\begin{proof}
(a) This is due to the definition.
(b) Assume that there exists a subspace $F\subset E$ such that $\ardeg(\ovl F)>\ardeg(\ovl E)$, then $\widehat\mu(\ovl{E/F})<0$ due to \cite[Proposition 4.3.12]{adelic}, hence a contradiction.
\end{proof}
\end{defi}

\begin{defi}
    We say an adelic vector bundle $\ovl E=(E,\xi=\{\nm_\omega\})$ is \textit{ultrametric} if for every $\omega\in\Omega$ such that $\lvert\cdot\rvert_\omega$ is non-Archimedean, $\nm_\omega$ is ultrametric, that is, 
    $$\lVert s+t\rVert_\omega\leq \max\{\lVert s\rVert_\omega,\lVert t\rVert_\omega\}.$$
\end{defi}
Through the end of this paper, for simplicity, when we say an adelic vector bundle, we mean an ultrametric adelic vector bundle.

\begin{prop}\label{prop_exact}
Let $(E,\xi)$ be an adelic vector bundle where $\dim_K E=r>0$. Consider a flag of vector subspace of $E$:
$$0=E_0\subset E_1\subset\cdots\subset E_n=E.$$ We denote by $\xi_i$ the restriction norm family of $x$ on $E_i$, and $\eta_i$ the quotient norm family of $\xi_i$ on $E_i/E_{i-1}$. The following inequality holds:
$$\begin{aligned}\sum\limits_{i=1}^n \ardeg(E_i/E_{i-1},\eta_i)\leqslant &\ardeg(E,\xi)\\
\leqslant &\sum\limits_{i=1}^n \ardeg(E_i/E_{i-1},\eta_i)+\frac{\nu(\Omega_\infty)}{2}r\ln r.\end{aligned}$$
\end{prop}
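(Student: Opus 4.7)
My plan is to first establish the statement in the case of a single short exact sequence (i.e.\ $n = 2$) and then iterate on the length $n$ of the flag.

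For $0 \to F \to E \to G \to 0$ of adelic vector bundles with $\dim_K F = r_1$ and $\dim_K E = r$, I use the canonical isomorphism $\det E \cong \det F \otimes \det G$ and, at each place $\omega$, compare the determinant norm $\lVert\cdot\rVert_{\det\xi,\omega}$ on $\det E_{K_\omega}$ with the tensor product of the determinant norms induced by the restriction $\xi_F$ and the quotient $\eta_G$. At a non-Archimedean $\omega$, the ultrametric property supplies, for every $\varepsilon > 0$, an $\varepsilon$-orthogonal basis of $E_{K_\omega}$ adapted to $F_{K_\omega}$; letting $\varepsilon \to 0$ yields exact equality of the two determinant norms. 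At an Archimedean $\omega$, one inequality, $\lVert s_F \wedge \tilde s_G\rVert_{\det\xi,\omega} \leq \lVert s_F\rVert_{\det\xi_F,\omega}\,\lVert s_G\rVert_{\det\eta_G,\omega}$, is direct by taking the infimum over lifts $\tilde s_G$ of $s_G$, whereas the reverse direction incurs a multiplicative distortion bounded by a dimension-only factor (of the form $\binom{r}{r_1}^{1/2}$) obtained from a Hermitian-approximation/John-ellipsoid estimate in finite-dimensional real or complex normed spaces. Applying $-\int_\Omega \ln(\cdot)\,\nu(d\omega)$ and noting that the non-Archimedean integrand vanishes gives
$$\ardeg(F,\xi_F) + \ardeg(G,\eta_G) \leq \ardeg(E,\xi) \leq \ardeg(F,\xi_F) + \ardeg(G,\eta_G) + \frac{\nu(\Omega_\infty)}{2}\ln\binom{r}{r_1}.$$

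For a general flag, I proceed by induction on $n$. The case $n = 1$ is trivial. For $n \geq 2$, I apply the short-exact-sequence estimate to $0 \to E_1 \to E \to E/E_1 \to 0$ and the induction hypothesis to $E/E_1$ equipped with its induced flag $0 \subset E_2/E_1 \subset \cdots \subset E_n/E_1$ of length $n-1$ and dimension $r - r_1$. The lower-half inequality is then iterated additivity of $\ardeg$ across the exact sequences. For the upper half, the per-step Archimedean corrections telescope via
$$\prod_{i=1}^n \binom{r_i}{r_{i-1}} \;=\; \frac{r!}{\prod_{i=1}^n (r_i - r_{i-1})!} \;\leq\; r!,$$
so combining with $\ln(r!) \leq r \ln r$ delivers the claimed upper bound $\tfrac{\nu(\Omega_\infty)}{2}\, r \ln r$.

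The main obstacle is pinning down the Archimedean distortion in the short-exact-sequence case: producing a dimension-only constant for the ratio between $\lVert\cdot\rVert_{\det\xi,\omega}$ and the product of the induced determinant norms on $F$ and $G$. This requires a non-trivial input from the convex geometry of finite-dimensional real and complex normed spaces (Hadamard-type or John-ellipsoid estimates). The non-Archimedean side, by contrast, is comparatively painless thanks to the existence of $\varepsilon$-orthogonal bases adapted to a subspace, which forces the comparison to be exact.
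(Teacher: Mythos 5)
Your lower bound and your non-Archimedean analysis are fine (the paper's convention that adelic vector bundles are ultrametric makes the determinant norm exactly multiplicative along the flag at finite places, which is why the error only involves $\nu(\Omega_\infty)$). The genuine gap is the Archimedean per-step estimate. The claimed distortion $\binom{r}{r_1}^{1/2}$ for a single short exact sequence is false: take $E=\R^2$ with the $\ell^1$-norm, $F=\R\,(1,1)$, $G=E/F$. Then $\lVert (1,1)\rVert_{\det F}=2$, the quotient norm of the class of $(1,0)$ is $1$, while $\lVert (1,1)\wedge(1,0)\rVert_{\det E}=\lVert e_1\wedge e_2\rVert_{\det E}=1$, so the ratio is $2>\sqrt{2}=\binom{2}{1}^{1/2}$. (This example also shows the proposition's bound $\tfrac{1}{2}r\ln r=\ln 2$ is essentially sharp for $r=2$, so no per-step constant as small as $\tfrac12\ln\binom{r}{r_1}$ can hold.) Because your final constant $\tfrac{\nu(\Omega_\infty)}{2}\,r\ln r$ comes precisely from that square root, the telescoping collapses: with a correct per-step constant of binomial type $\binom{r}{r_1}$ the product telescopes to the multinomial coefficient and you only get $\nu(\Omega_\infty)\ln(r!)\sim \nu(\Omega_\infty)\,r\ln r$, twice the stated bound, and iterating a per-step John estimate ($\dim^{\dim/2}$ at each stage) is worse still. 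So the induction on the length of the flag, as designed, cannot reach the stated constant.

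The repair is to spend the Hermitian approximation once per Archimedean place for the whole flag rather than once per step: choose a John ellipsoid norm $h_\omega$ with $r^{-1/2}\lVert\cdot\rVert_{h_\omega}\leq\lVert\cdot\rVert_\omega\leq\lVert\cdot\rVert_{h_\omega}$; for the Hermitian norm the determinant norms of the successive subquotients are exactly multiplicative along the entire flag, and passing between $\xi_\omega$ and $h_\omega$ on $\det E$ and on the subquotients costs a total factor $r^{r/2}$, i.e.\ $\tfrac{r}{2}\ln r$ per Archimedean place, independently of the length $n$ of the flag. Integrating over $\Omega_\infty$ gives exactly the stated error term. Note also that the paper itself does not reprove this statement: its proof is a citation of \cite[Proposition 4.3.12]{adelic}, whose argument is essentially the single global Hermitian-approximation one just sketched, not a step-by-step iteration over short exact sequences.
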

\begin{proof}
See \cite[Proposition 4.3.12]{adelic}.
\end{proof}

\begin{rema}
If the field $K$ is perfect, then due to \cite[5.6.2]{adelic}, there exists a constant $C>0$  such that for any adelic vector bundles $\overline E_1,\cdots, \ovl E_l$, it holds that
\begin{align*}
    \mumin(\overline E_1\otimes_{\epsilon,\pi}\cdots \otimes_{\epsilon,\pi}\overline E_l)\geq \mumin(\ovl E_1)+\cdots+\mumin(\ovl E_l)-C\sum_{i=1}^{l} \ln(\dim_K(E_i))
\end{align*}
which was firstly proved in \cite{BOST2013} and reformulated in \cite[Chapter 5]{adelic}. We say called \textit{minimal slope property of level}$\geq C$. All the constant $C$ mentioned in this article refers to the constant in this sense.
\end{rema}
\begin{defi}
Let $\overline E=(E,\xi)$ be an adelic vector bundle of dimension $r$. The \textit{Harder-Narasimhan} $\R$-filtration is given by
$$\mathcal F_{hn}^t(\overline E)=\sum_{\substack{0\not=F\subset E\\\mumin(F,\xi_F)\geq t}}F.$$
We denote by $\mumin(\overline E)=\widehat\mu_{r}(\ovl E)\le \widehat\mu_{n-1}(\ovl E)\le\cdots\le \widehat\mu_1(\ovl E)=\mumax(\overline E)$ the jumping points of the $\R$-filtration.
\end{defi}
\begin{prop}\label{prop_succ_slope}
Let $\ovl E$ be an adelic vector bundle. Then
$$\begin{aligned}\sum \widehat\mu_i(\ovl E)&\le \deg(\overline E)\le \sum \widehat\mu_i(\ovl E)+\frac{1}{2}r\ln r,\\
\sum \max(\widehat\mu_i(\ovl E),0)&\le \deg_+(\overline E)\le \sum \max(\widehat\mu_i(\ovl E),0)+\frac{1}{2}r\ln r.
\end{aligned}$$
\end{prop}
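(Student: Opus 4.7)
The plan is to reduce both double inequalities to Proposition~\ref{prop_exact}, applied to two carefully chosen flags, exploiting the fact that every subquotient of the Harder--Narasimhan filtration is semistable and therefore has Arakelov degree equal to its dimension times its slope.

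For the first chain, let $\mu_{(1)}>\cdots>\mu_{(k)}$ be the distinct jumping points of $\mathcal F_{hn}^\bullet(\ovl E)$, with multiplicities $r_{(1)},\ldots,r_{(k)}$, and set $F_j:=\mathcal F_{hn}^{\mu_{(j)}}(\ovl E)$. The resulting flag $0=F_0\subset\cdots\subset F_k=E$ has semistable subquotients with $\deg(F_j/F_{j-1})=r_{(j)}\mu_{(j)}$. Since the $\widehat\mu_i(\ovl E)$ are by definition the $\mu_{(j)}$ listed with multiplicity $r_{(j)}$, we have $\sum_j r_{(j)}\mu_{(j)}=\sum_i\widehat\mu_i(\ovl E)$, and Proposition~\ref{prop_exact} yields the first double inequality (any discrepancy between the constants $\tfrac{1}{2}r\ln r$ and $\tfrac{\nu(\Omega_\infty)}{2}r\ln r$ is absorbed into a universal constant).

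For the lower bound on $\deg_+$, I would take $F_+:=\mathcal F^0_{hn}(\ovl E)$, which has $\mumin\geq 0$ and, by an elementary check that the HN filtration of $\ovl{F_+}$ is the $\geq 0$-truncation of that of $\ovl E$, whose own successive slopes are precisely the non-negative $\widehat\mu_i(\ovl E)$. Applying the already-established first inequality to $\ovl{F_+}$ gives $\deg(\ovl{F_+})\geq\sum_i\max(\widehat\mu_i(\ovl E),0)$; combined with $\deg_+(\ovl E)\geq\deg(\ovl{F_+})$, the lower bound follows.

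The upper bound on $\deg_+$ is the only step that requires ingenuity. Fix a non-zero subspace $G\subset E$, intersect with the HN flag to obtain $G_j:=G\cap F_j$, and observe that each $G_j/G_{j-1}$ embeds into the semistable piece $F_j/F_{j-1}$, forcing $\widehat\mu(G_j/G_{j-1})\leq\mu_{(j)}$. Splitting according to the sign of $\mu_{(j)}$, and using $\dim(G_j/G_{j-1})\leq r_{(j)}$, we obtain
\[
\deg(G_j/G_{j-1})=\dim(G_j/G_{j-1})\cdot\widehat\mu(G_j/G_{j-1})\leq r_{(j)}\max(\mu_{(j)},0).
\]
Summing over $j$ and applying Proposition~\ref{prop_exact} to the flag $\{G_j\}$ of $G$ produces
\[
\deg(G)\leq\sum_i\max(\widehat\mu_i(\ovl E),0)+\tfrac12 r\ln r,
\]
using $\dim G\leq r$. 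Taking the supremum over $G$ completes the argument. The main subtlety is precisely this translation from an arbitrary subspace of $E$ to intrinsic HN data of $\ovl E$; the device of the induced flag $G\cap F_\bullet$ is what keeps each graded piece inside a semistable slice and thus under direct numerical control.
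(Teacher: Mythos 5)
The paper itself gives no proof of this proposition (it is imported from \cite{adelic}, where the upper bounds carry the factor $\nu(\Omega_\infty)$), so your argument has to stand on its own, and it contains a genuine gap: both upper bounds rest on the claim that the graded pieces of the Harder--Narasimhan flag are semistable of slope exactly the jump, i.e.\ $\ardeg(\ovl{F_j/F_{j-1}})=r_{(j)}\mu_{(j)}$, and (in the $\deg_+$ argument) $\mumax(\ovl{F_j/F_{j-1}})\le\mu_{(j)}$. Over an adelic curve this is precisely the delicate point and it is false as an exact statement. The filtration $\mathcal F^t_{hn}$ is defined through $\mumin$ of subspaces of $E$, and because $\ardeg$ is additive in exact sequences only up to the Archimedean defect of Proposition \ref{prop_exact}, one cannot lift a subspace of large slope in $F_j/F_{j-1}$ to a subspace of $E$ with $\mumin>\mu_{(j)}$ without losing terms of size $O(\nu(\Omega_\infty)\ln r)$; in particular $\mumax(\ovl E)$ may strictly exceed the largest jump $\widehat\mu_1(\ovl E)$. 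A concrete example: take the standard adelic curve of $\Q$, $E=\Q^2$ with the lattice norms at the finite places and the sup-norm at the infinite place. Every line and every quotient line has degree $\le 0$, so all jumps of $\mathcal F^t_{hn}(\ovl E)$ are $\le 0$, yet $\ardeg(\ovl E)\ge\ln 2>0$; hence for at least one $j$ one has $\ardeg(\ovl{F_j/F_{j-1}})>r_{(j)}\mu_{(j)}$. So the inequality $\ardeg(\ovl{F_j/F_{j-1}})\le r_{(j)}\mu_{(j)}$, and likewise your bound $\widehat\mu(G_j/G_{j-1})\le\mu_{(j)}$, are unjustified; if one substitutes the correct approximate versions, extra Archimedean error terms appear and must be tracked before one can recover a bound of the stated shape. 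Your argument as written does not do this, and this is exactly why the proposition is a nontrivial result of \cite{adelic} rather than a formal consequence of Proposition \ref{prop_exact}.

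Two smaller points. First, your lower bounds are essentially correct, but they silently use the lemma $\mumin(\mathcal F^t_{hn}(\ovl E))\ge t$ (equivalently, that a sum of subspaces with $\mumin\ge t$ again has $\mumin\ge t$): this is what gives $\ardeg(\ovl{F_j/F_{j-1}})\ge r_{(j)}\mu_{(j)}$ and $\mumin(\ovl{F_+})\ge 0$, and it requires the direct-sum/quotient argument of \cite{adelic}; it should be cited or proved, not folded into ``semistability of the HN pieces''. Note also that for the lower bound on $\deg_+$ you do not need to identify the HN filtration of $\ovl{F_+}$: the left (error-free) inequality of Proposition \ref{prop_exact} applied to the flag $F_1\subset\cdots\subset F_+$ already gives $\ardeg(\ovl{F_+})\ge\sum_i\max(\widehat\mu_i(\ovl E),0)$. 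Second, the mismatch between $\tfrac12 r\ln r$ and $\tfrac{\nu(\Omega_\infty)}{2}r\ln r$ cannot be ``absorbed into a universal constant'': if $\nu(\Omega_\infty)>1$ your argument does not yield the constant in the statement. The honest reading is that the statement as printed omits the factor $\nu(\Omega_\infty)$ (as in Proposition \ref{prop_exact} and in \cite{adelic}); you should say this explicitly rather than absorb it.
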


\section{Arithmetic over trivially valued field}\label{sect_arith_triv}
In this section, let $K$ be aribitrary field and $(\lvert\cdot\rvert_0)$ be the trivial absolute value on $K$.
\subsection{Ultrametrically normed vector spaces over $(K,\lvert\cdot\rvert_0)$}
Let $E$ be a finite-dimensional vector space over $K$. Remind that a $\R$-filtration $\mathcal F^t E$ on $E$ is a map
$$t\in\R\mapsto \mathcal F^t E\subset E$$
where each $\mathcal F^t E$ is a subspace of $E$ and $\mathcal F^t E\subset \mathcal F^{t'} E$ if $t\geqslant t'$. Any $\R$-filtration corresponds to a sequence of \textit{successive slopes}, that is,
$$\mu_i=\sup\{t\mid \dim_K\mathcal F^t E\geqslant i\}$$
where $i=1,2,\cdots,\dim_K E$.

From the $\R$-filtration, we can construct an ultrametric norm $\nm_{\mathcal F}$ over $(K,\lvert\cdot\rvert_0)$ which is given by
$$\lVert s\rVert_{\mathcal F}:=\exp(-\lambda_{\mathcal F}(s))$$
where $\lambda_{\mathcal F}(s):=\sup\{t\mid s\in\mathcal F^t E\}.$
This actually gives a bijection between the sets of ultrametic norms and $\R$-filtrations. Moreover, the pair $\ovl E:=(E,\nm_{\mathcal F})$ can be viewed as an adelic vector bundle over $(K,\lvert\cdot\rvert_0).$ In this specific case, its minmial slope can be computed by $\mumin(E,\nm_{\mathcal F})=-\ln(\max\{\lVert x\rVert_{\mathcal F}\mid x\in E\})$. 
In particular, for any subspace $F$ of $E$, we denote by $\nm_F$ the restricted ultrametric norm on $F$, then
$$\mumin(F,\nm_F)\geqslant \mumin(\overline E)$$
\begin{prop}
Let $\overline E=(E,\nm)$ be an ultrametrically normed vector space over $(K,\lvert\cdot\rvert_0)$.
Let $$0\rightarrow F\xrightarrow{f} E\rightarrow G\rightarrow 0$$ be an exact sequence of vector spaces over $K$. Then it holds that
$$\mumin(\overline E)=\min\{\mumin(F,\nm_F),\mumin(G,\nm_{E\twoheadrightarrow G})\}.$$
\end{prop}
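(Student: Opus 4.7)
The plan is to convert the identity into a direct comparison of supremal norm values via the formula $\mumin(V,\|\cdot\|) = -\ln \sup\{\|v\| : v \in V \setminus \{0\}\}$ recalled just above the proposition. Writing $M_E$, $M_F$, $M_G$ for the suprema of $\|\cdot\|_E$, $\|\cdot\|_F$, $\|\cdot\|_{E \twoheadrightarrow G}$ over nonzero vectors, the claim becomes $M_E = \max(M_F, M_G)$.

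One inequality is formal. The inclusion $F \hookrightarrow E$ is isometric for the restriction norm, hence $M_F \leq M_E$; and by the very definition of the quotient norm $\|\pi(y)\|_{E \twoheadrightarrow G} \leq \|y\|_E$ for every $y \in E$, which gives $M_G \leq M_E$. Together these yield $\mumin(\ovl E) \leq \min\bigl(\mumin(F,\|\cdot\|_F),\, \mumin(G,\|\cdot\|_{E \twoheadrightarrow G})\bigr)$.

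For the reverse inequality I would fix $x \in E \setminus \{0\}$. If $x \in F$, then $\|x\|_E = \|x\|_F \leq M_F$ and we are done. Otherwise $\bar x := \pi(x) \neq 0$, and for every $\epsilon > 0$ one can pick a lift $y_\epsilon \in E$ of $\bar x$ with $\|y_\epsilon\|_E \leq \|\bar x\|_{E \twoheadrightarrow G} + \epsilon \leq M_G + \epsilon$. Since $x - y_\epsilon$ lies in $F$, one has $\|x - y_\epsilon\|_E = \|x - y_\epsilon\|_F \leq M_F$, and the ultrametric inequality in $(E, \|\cdot\|_E)$ gives
\[
\|x\|_E \leq \max\bigl(\|y_\epsilon\|_E,\,\|x - y_\epsilon\|_E\bigr) \leq \max(M_G + \epsilon,\, M_F).
\]
Letting $\epsilon \to 0$ and taking the supremum over $x \in E \setminus \{0\}$ yields $M_E \leq \max(M_F, M_G)$, completing the proof.

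The key point (and the only real obstacle if one tried to weaken the hypothesis) is the last display: ultrametricity is used decisively, since the ordinary triangle inequality would only give $\|x\|_E \leq \|y_\epsilon\|_E + \|x - y_\epsilon\|_E$, producing the much weaker bound $M_E \leq M_F + M_G$ and no exact $\min$-formula. This is why the statement belongs to the ultrametric trivially valued setting of this section, where the bijection between ultrametric norms and $\R$-filtrations makes the hypothesis automatic.
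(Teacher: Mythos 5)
Your proof is correct and follows essentially the same route as the paper's: both reduce the statement to the identity $\mumin(V,\lVert\cdot\rVert)=-\ln\sup_{v}\lVert v\rVert$ and then exploit the ultrametric inequality on a decomposition of an element of $E$ into an element of $F$ plus a lift from $G$. The only cosmetic difference is that the paper picks a norm-maximizing $x\in E$ (using that the supremum is attained, since the norm takes finitely many values) and applies the equality case of the ultrametric inequality, whereas your $\epsilon$-argument with a near-optimal lift $y_\epsilon$ avoids needing the supremum to be attained; both are fine.
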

\begin{proof}If $E=0$, the proposition is trivial. So we assume that $E\not=0$.
If $\mumin(F,\nm_F)=\mumin(\overline E)$, then we are done. So we assume that $\mumin(F,\nm_F)>\mumin(\overline E)$.
Let $x\in E$ be an element with $\lVert x\rVert=\exp(-\mumin(\overline E))$, that is, an element with the maximal norm.
Since $\mumin(F,\nm_F)>\mumin(\overline E)$, $x$ is not contained in the image of $F$.
Therefore $\tilde x\not=0\in G$, it holds that $$\lVert \tilde x\rVert_{E\twoheadrightarrow G}=\inf\limits_{y\in F}\lVert x+f(y)\rVert=\inf\limits_{y\in F}\max\{\lVert x\rVert, \lVert f(y)\rVert\}=\lVert x\rVert$$
which implies that $\mumin(G,\nm_{E\twoheadrightarrow G})=\mumin(\overline E)$.
\end{proof}
\begin{rema}
The above proposition can be also obtained by using the inequality in \cite[Proposition 4.3.32]{adelic}.
\end{rema}
\begin{defi}
Let ${\overline E_i=(E_i,\nm_i)}_{i=1}^n$ be a collection of ultrametric normed vector spaces over $(K,\lvert\cdot\rvert_0)$. We define the a norm $\nm$ on $\mathop\bigoplus\nolimits_{i=1}^n E_i$ by
$$\lVert (a_1,\cdots,a_n)\rVert:=\max_{i \in \{1, \ldots, n\}}\{\lVert a_i\rVert_i\}$$
for $(a_1,\cdots,a_n)\in\mathop\bigoplus\nolimits_{i=1}^n E_i$, which is called the direct sum of norms $\{\nm_i\}_{i=1}^n$, and the ultrametric normed vector space $(\mathop\bigoplus\nolimits_{i=1}^n E_i,\nm)$ can be denoted by \[\mathop\bigoplus\limits_{i=1,\dots,n}^{\perp}\overline E_i.\]
\end{defi}
\begin{rema}
It's easy to see that $$\mumin\left(\mathop\bigoplus\limits_{i=1,\dots,n}^{\perp}\overline E_i\right)= \min_{i \in \{1, \ldots, n\}}\{\mumin(\overline E_i)\}.$$
Moreover, the successive slopes of $\mathop\bigoplus\limits_{i=1,\dots,n}^{\perp}\overline E_i$ is just the sorted sequence of the union of successive slopes of $\overline E_i$.
\end{rema}
\begin{defi}
Let $\overline E=(E,\nm_E)$ and $\overline F=(F,\nm_F)$ be ultrametric normed vector spaces over $(K,\lvert\cdot\rvert_0)$. The tensor product $\overline E\otimes\overline F$ is defined by equipping $E\otimes F$ with the ultrametric norm
$$\lVert x\rVert_{E\otimes F}:=\min\left\{\max_i\big\{\lVert s_i\rVert_E\cdot\lVert t_i\rVert_F\big\}\big|\text{ }x=\sum_i s_i\otimes t_i\right\} $$
\end{defi}
\subsection{Filtered graded linear series}
\begin{defi}
Let $E_\bullet=\{E_n\}_{n\in\N}$ be a collection of vector subspaces of $K(X)$ over $K$. We say $E_\bullet$ is a \textit{graded linear series} if $\mathop\oplus_{n\in\N}E_n Y^n$ is a graded sub-$K$-algebra of $K(X)[Y]$. If $\mathop\oplus_{n\in\N}E_n Y^n$ is finitely generated $K$-algebra, we say $E_\bullet$ is of \textit{finite type}. We say $E_\bullet$ is of \textit{subfinite} type if $E_\bullet$ is contained in a graded linear series of finite type.

We equip each $E_n$ an ultrametric norm $\nm_n$. The collection $\overline E_\bullet=\{(E_n,\nm_n)\}$ is called an \textit{ultrametrically normed graded linear series}. 
Denote by $\delta:\N\rightarrow \R$ the function maps $n$ to $C\ln \dim_K(E_n)$.
We say $\ovl E_\bullet$ is \textit{$\delta$-superadditive} if we have the following inequality:
$$\prod_{i=1}^r\lVert s_i\rVert_{n_i}\leqslant \left\lVert \prod_{i=1}^r s_i\right\rVert_{n_1+n_2+\cdots+n_r}\prod_{i=1}^r\dim_K(E_{n_i})^C$$
holds for any $r\geqslant 2$, $i=1,\dots,r$, $n_i\geqslant 0$ and $s_i\in E_{n_i}$. 
\end{defi}
\begin{defi}\label{def_asymp}
We define the volume of a graded linear series $E_\bullet$ of Kodaira-dimension $d$ by
$$\mathrm{vol}(E_\bullet):=\limsup_{n\rightarrow +\infty}\frac{\dim_K(E_n)}{n^d/d!}.$$
If $\overline E_\bullet$ is a ultrametrically normed graded linear series satisfying $\delta$-superadditivity, then we define its
\textit{arithmetic volume} and \textit{arithmetic }$\chi$\textit{-volume} by
$$\begin{aligned}\vol(\overline E_\bullet):=\limsup_{n\rightarrow +\infty} \frac{\sum \max(\widehat{\mu}_i(\overline E_n),0)}{n^{d+1}/(d+1)!},\\
\vol_\chi(\overline E_\bullet):=\limsup_{n\rightarrow +\infty}\frac{\sum \widehat{\mu}_i(\overline E_n)}{n^{d+1}/(d+1)!}.
\end{aligned}$$
Its \textit{asymptotic maximal slope}, \textit{lower asymptotic minimal slope}, and \textit{lower asymptotic minimal slope} is defined respectively by
$$\begin{aligned}\mumax^{\mathrm{asy}}(\overline E_\bullet):=\limsup_{n\rightarrow+\infty} \frac{\mumax(\overline E_n)}{n},\\
\mumin^{\inf}(\overline E_\bullet):=\liminf_{n\rightarrow+\infty} \frac{\mumin(\overline E_n)}{n},\\
\mumin^{\inf}(\overline E_\bullet):=\limsup_{n\rightarrow+\infty} \frac{\mumin(\overline E_n)}{n}.
\end{aligned}$$
\end{defi}

\subsection{Arithmetic Okounkov bodies}\label{subsect_Okounkov}
Let $\overline E_\bullet=\{\overline E_n=(E_n,\nm_n)\}_{n\in\N}$ be an ultrametrically normed graded linear series of subfinite type satisfying $\delta$-superadditivity. Let $r_n=\dim_K(E_n)$.
\begin{prop}\label{prop_asy_norm}For any $n\in\N$ and $s\in E_n$, we define
$$\lVert s\rVert'_n:=\liminf_{m\rightarrow \infty}\lVert s^m\rVert_{nm}^{\frac{1}{m}}$$
Then the following four properties holds:
\begin{enumerate}
    \item[\textnormal{(1)}] $\lVert s\rVert'_n=\lim\limits_{m\rightarrow \infty}\lVert s^m\rVert_{nm}^{\frac{1}{m}}$.
    \item[\textnormal{(2)}] $\lVert\cdot\rVert'_n$ is an ultrametric norm on $E_n$ over trivially valued $K$.
    \item[\textnormal{(3)}] $\lVert s\rVert'_n \lVert t\rVert'_m\geqslant \lVert st\rVert'_{n+m}$ for any $s\in E_n$ and $t\in E_m$.
    \item[\textnormal{(4)}] $\displaystyle\frac{\lVert s\rVert'_n}{\lVert s\rVert_n}\leqslant r_n^C .$
\end{enumerate}
\end{prop}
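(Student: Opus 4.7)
The plan is to deduce (1), (3), and (4) as direct consequences of $\delta$-superadditivity combined with the polynomial growth of $r_n = \dim_K(E_n)$ coming from the subfiniteness of $E_\bullet$, and then to handle (2) via a careful subsequence argument. A recurring device is that any factor of the form $r_{nk}^{C/k}$ tends to $1$ as $k \to \infty$, which is what allows polynomial error terms to disappear asymptotically.

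For (1), I would apply the $\delta$-superadditivity to $\lfloor m/k\rfloor$ copies of $s^k$ together with one copy of $s^{m-k\lfloor m/k\rfloor}$ to obtain
\[
\lVert s^m\rVert_{nm} \le \lVert s^k\rVert_{nk}^{\lfloor m/k\rfloor}\,\lVert s^{m-k\lfloor m/k\rfloor}\rVert_{n(m-k\lfloor m/k\rfloor)}\,r_{nk}^{C\lfloor m/k\rfloor}\,r_{n(m-k\lfloor m/k\rfloor)}^C.
\]
Taking $m$-th roots and letting $m \to \infty$ with $k$ fixed yields $\limsup_m \lVert s^m\rVert_{nm}^{1/m} \le \lVert s^k\rVert_{nk}^{1/k}\,r_{nk}^{C/k}$; letting $k \to \infty$ then gives $\limsup \le \liminf$, so the limit exists. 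This is essentially Fekete's lemma with a logarithmic-order correction. Assertion (4) follows from the same device applied to $m$ copies of $s$: $\lVert s^m\rVert_{nm} \le \lVert s\rVert_n^m r_n^{mC}$, which after passing to the $m$-th root and the limit gives $\lVert s\rVert'_n \le \lVert s\rVert_n r_n^C$. For (3), the two-term $\delta$-superadditivity applied to $s^k \in E_{nk}$ and $t^k \in E_{mk}$ yields $\lVert(st)^k\rVert_{(n+m)k} \le \lVert s^k\rVert_{nk}\lVert t^k\rVert_{mk}(r_{nk}r_{mk})^C$; taking $k$-th roots and letting $k \to \infty$ then produces the desired asymptotic submultiplicativity.

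The remaining assertion (2) is the most delicate. Homogeneity under the trivial absolute value is automatic. For the ultrametric triangle inequality, I would expand $(s+t)^m = \sum_j \binom{m}{j} s^j t^{m-j}$ and use the ultrametric property of $\lVert\cdot\rVert_{nm}$ together with $\lvert\binom{m}{j}\rvert_0 \le 1$ to get $\lVert(s+t)^m\rVert_{nm} \le \max_j \lVert s^j t^{m-j}\rVert_{nm}$, then bound each term by $\delta$-superadditivity as $\le \lVert s^j\rVert_{nj}\lVert t^{m-j}\rVert_{n(m-j)}(r_{nj}r_{n(m-j)})^C$. Extracting a subsequence along which the maximizing index satisfies $j(m)/m \to \alpha \in [0,1]$, the interior case $\alpha \in (0,1)$ gives a limit $(\lVert s\rVert'_n)^\alpha (\lVert t\rVert'_n)^{1-\alpha} \le \max(\lVert s\rVert'_n, \lVert t\rVert'_n)$. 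The main obstacle is the boundary cases $\alpha \in \{0,1\}$: when $j(m)/m \to 0$, the short-exponent factor $\lVert s^{j(m)}\rVert_{nj(m)}^{1/m}$ must tend to $1$, which follows from the crude bound $\lVert s^j\rVert_{nj} \le \lVert s\rVert_n^j r_n^{jC}$ combined with the subfiniteness of $E_\bullet$. Without this polynomial control on $r_n$, the boundary regime would not be tractable, and that is precisely where subfiniteness pays off.
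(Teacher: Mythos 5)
Your proposal is correct and takes essentially the same route as the paper: a Fekete-type superadditivity argument (with the logarithmic dimension correction) for (1), the crude bound $\lVert s^m\rVert_{nm}\le \lVert s\rVert_n^m r_n^{mC}$ for (4), the two-term $\delta$-superadditivity inequality with $r_{nk}^{C/k}\to 1$ for (3), and binomial expansion plus ultrametricity plus $\delta$-superadditivity for (2). The only differences are cosmetic: the paper invokes a generalized Fekete lemma where you prove it inline, and in (2) it controls the short factor by a uniform $\epsilon$--$N$ splitting (bounding the finitely many small exponents by a fixed constant) rather than your subsequence extraction of the maximizing index, which plays exactly the same role.
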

\begin{proof}
(1) Let $a_m:=-\ln\|s^m\|_{nm}$. By the $\delta$-superadditivity, for $m_1,\dots,m_l\in \N_+$, we have
$$a_{m_1+\cdots+m_l}\geqslant a_{m_1}+\cdots+a_{m_l}-\sum_{i=1}^l \delta(nm_i).$$ So the sequence $\left\{\displaystyle\frac{a_m}{m}\right\}$ converges in $\R$ due to a generalized Fekete's lemma \cite[Proposition 6.3.15]{adelic}. 

(2) For any $s,t\in E_n$, $\lVert(s+t)^m\rVert_{nm}\leqslant \max\limits_{i=0,\dots,m}\left\{\lVert s^it^{m-i}\rVert_{nm} \right\}$.
Again by the $\delta$-supperadditivity, we can deduce that $$\lVert s^i t^{m-i}\rVert_{nm}\leqslant \lVert s^i\rVert_{ni}\lVert t^{m-i}\rVert_{n(m-i)}\exp(\delta(ni)+\delta(n(m-i))).$$
Let $A=\max\{\lVert s\rVert'_n,\lVert t\rVert'_n\}$. Then for any $\epsilon>0$, there is an integer $N$ such that \[\begin{cases}\displaystyle
    \lVert s^l\rVert_{nl}^{\frac{1}{l}}\exp(\frac{\delta(nl)}{l})\leqslant A+\epsilon\\
    \displaystyle
    \lVert t^l\rVert_{nl}^{\frac{1}{l}}\exp(\frac{\delta(nl)}{l})\leqslant A+\epsilon
\end{cases}\] for every $l>N$.

Let $B=\max\limits_{i=0,\dots,N}\{\max(\lVert s^i\rVert_{ni}, \lVert t^i\rVert_{ni})\exp(\delta(ni))\}$.
When $m>2N$, either $i$ or $m-i$ is greater than $N$, thus
$$\lVert (s+t)^m\rVert_{nm}^{\frac{1}{m}}\leqslant \max\limits_{i=0,\dots,N}\{(A+\epsilon)^{\frac{m-i}{m}}B^{\frac{1}{m}}\}.$$
The right hand side of the inequality has the limit $A+\epsilon$ which implies that there exists $N'\in\N_+$ such that 
$\lVert(s+t)^m\rVert_{nm}^{\frac{1}{m}}\leqslant A+2\epsilon$ for every $m>N'$.
Therefore $\lVert s+t\rVert'_n\leqslant A$.

(3) Due to the $\delta$-superadditivity, we have $$\frac{-\ln\lVert(st)^l\rVert_{l(n+m)}}{l}\geqslant \frac{-\ln\lVert s^l\rVert_{nl}}{l}+\frac{-\ln\lVert t^l\rVert_{ml}}{l}-\frac{\delta(nl)}{l}-\frac{\delta(ml)}{l}$$ for every $l\in\N_+$. Let $l\rightarrow +\infty$, we obtain (3).

(4) This is a direct result from an estimate of the limit.
\end{proof}
Let $v:K(X)\rightarrow \Z^{d}$ be a valuation of rank $d$ with center at a regular rational point $p\in X$, constructed as in \cite[Theorem 1.1]{chen2018newton}. Here $\Z^d$ is equipped with the lexicographic order. 
We denote that \begin{align*}
    &\Delta(E_\bullet):=\text{convex hull of }\{n^{-1}v(s)|s\in E_n\}\subset \R^{d},\\
    &\Delta^t(E_\bullet):=\text{convex hull of }\{n^{-1}v(s)|s\in E_n,\lVert s\rVert'_n\leq \exp{(-nt)} \}\subset \R^{d}
\end{align*}
for $t\in\R$.
\begin{defi}
We define the \textit{concave transform} $G_{\ovl E_\bullet}:\Delta(E_\bullet)\rightarrow \R$ associated to $\ovl E_\bullet$ as 
$$G_{\ovl E_\bullet}(x):=\sup\{t\in\R\mid x\in \Delta^t(E_\bullet)\}.$$
\end{defi}
\begin{prop}\label{prop_Okounkov} \begin{enumerate}
    \item[\textnormal{(1)}] $G_{\ovl E_\bullet}$ is an upper semi-continuous concave function on $\Delta(E_\bullet).$
    \item[\textnormal{(2)}] $\displaystyle\sup_{x\in \Delta(E_\bullet)}G_{\ovl E_\bullet}(x)=\mumax^{\mathrm{asy}}(\ovl E_\bullet)$ and $\displaystyle\inf_{x\in \Delta(E_\bullet)}G_{\ovl E_\bullet}(x)\geq \mumin^{\mathrm{inf}}(\ovl E_\bullet)$
    \item[\textnormal{(3)}] If $\mumin^{\inf}(\ovl E_\bullet)>-\infty$ and $\mathrm{vol}(E_\bullet)>0$, then $\displaystyle\Big\{\frac{\sum \widehat{\mu}_i(\overline E_n)}{n^{d+1}/(d+1)!}\Big\}_{n\in\N_+}$ converges to $\displaystyle\int_{\Delta(E_\bullet)} G_{\ovl E_\bullet}(x)dx$. In particular,
    $\vol_{\chi}(\ovl E_\bullet)=\displaystyle\int_{\Delta(E_\bullet)} G_{\ovl E_\bullet}(x)dx$.
    \item[\textnormal{(4)}] If $\mumin^{\sup}(\ovl E_\bullet)\geq 0$ and $\mathrm{vol}(E_\bullet)>0$, then $\vol_\chi(\ovl E_\bullet)=\vol(\ovl E_\bullet)$.
    \item[\textnormal{(5)}] Let $\ovl E_\bullet^{(i)}=\{(E_n,\nm_{n}^{(i)})\}$, $i=1,2,3$ be $\delta$-supperadditive ultrametrically normed linear series of subfinite-type. Assume that $$E_n^{(1)}\cdot E_m^{(2)}\subset E_{n+m}^{(3)}$$ and 
    $$\lVert s t\rVert_{n}^{(3)}\leq \lVert s\rVert_n^{(1)}\lVert t\rVert_{n}^{(2)}\dim_K(E_n^{(1)})^C\dim_K(E_n^{(2)})^C$$
    for any $s\in E_n^{(1)},t\in E_n^{(2)}$. Then for any $x\in \Delta(E_\bullet^{(1)}),y\in\Delta(E_\bullet^{(2)})$, we have 
    $G_{\ovl E_\bullet^{(1)}}(x)+G_{\ovl E_\bullet^{(2)}}(y)\leq G_{\ovl E_\bullet^{(3)}}(x+y)$.
\end{enumerate}
\end{prop}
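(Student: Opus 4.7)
I plan to prove the five claims in order, reducing (1), (2), and (5) to Proposition \ref{prop_asy_norm}, treating (4) as a simple comparison between $\vol_\chi$ and $\vol$, and concentrating the bulk of the technical work in (3).

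For (1), concavity reduces to showing that for $x_i = v(s_i)/n_i$ with $\lVert s_i \rVert'_{n_i} \leq e^{-n_i t_i}$ ($i = 1, 2$) and positive integers $a, b$, the product $s_1^{a n_2} s_2^{b n_1} \in E_{(a+b) n_1 n_2}$ has valuation equal to $(a+b) n_1 n_2 \cdot \frac{a x_1 + b x_2}{a+b}$ and, by iterating parts (2) and (3) of Proposition \ref{prop_asy_norm}, primed norm bounded by $e^{-(a+b) n_1 n_2 \cdot (a t_1 + b t_2)/(a+b)}$; this places the rational convex combination $\frac{a x_1 + b x_2}{a+b}$ in $\Delta^{(at_1+bt_2)/(a+b)}(E_\bullet)$, and density extends this to all convex combinations. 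Upper semi-continuity follows from the identity $\{G_{\ovl E_\bullet} \geq t\} = \bigcap_{s < t} \overline{\Delta^s(E_\bullet)}$, an intersection of closed convex sets. For (2), the inequality $G_{\ovl E_\bullet}(v(s)/n) \geq -\ln\lVert s \rVert'_n/n$ combined with part (4) of Proposition \ref{prop_asy_norm}, which gives $-\ln \lVert s \rVert'_n / n = -\ln \lVert s \rVert_n / n + O(\ln r_n / n)$, furnishes the supremum by taking $s$ attaining $\mumax(\ovl E_n)$, while the infimum bound follows by taking $s$ attaining $\mumin(\ovl E_n)$ together with concavity and upper semi-continuity from (1).

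The heart of the argument is (3), which I handle by adapting Boucksom--Chen's Newton--Okounkov body theorem to the ultrametric trivially valued setting. Here the Harder--Narasimhan filtration is explicit: $\mathcal F^t_{hn} \ovl E_n = \{s \in E_n : \lVert s \rVert_n \leq e^{-t}\}$. Since $\delta$-superadditivity together with Proposition \ref{prop_asy_norm} makes $\lVert \cdot \rVert'$ a genuine multiplicative ultrametric norm, the filtered subsets $\{s \in E_n : \lVert s \rVert'_n \leq e^{-n t}\}$ form a graded $K$-subalgebra whose image under $v$ gives rise to $\Delta^t(E_\bullet)$. A lattice-point count in $n \Delta^t(E_\bullet)$ then yields, for almost every $t \in (\mumin^{\inf}(\ovl E_\bullet), \mumax^{\mathrm{asy}}(\ovl E_\bullet))$,
\begin{equation*}
\lim_{n \to \infty} \frac{\dim_K \mathcal F^{n t}_{hn} \ovl E_n}{n^d} = \mathrm{vol}\bigl(\{x \in \Delta(E_\bullet) : G_{\ovl E_\bullet}(x) \geq t\}\bigr),
\end{equation*}
with the $r_n^C$ correction lost after division by $n^d$. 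Writing $\sum_i \widehat\mu_i(\ovl E_n) = a r_n + \int_a^\infty \dim_K \mathcal F^t_{hn} \ovl E_n \, dt$ for $a$ below all slopes, substituting $t = n u$, and applying dominated convergence via the uniform slope bounds (coming from $\mumin^{\inf} > -\infty$ and the finiteness of $\mumax^{\mathrm{asy}}$ for subfinite-type series) produces, via the layer-cake formula,
\begin{equation*}
\lim_{n \to \infty} \frac{\sum_i \widehat\mu_i(\ovl E_n)}{n^{d+1}/(d+1)!} = \int_{\Delta(E_\bullet)} G_{\ovl E_\bullet}(x) \, dx.
\end{equation*}
Since the limit exists, the $\limsup$ defining $\vol_\chi(\ovl E_\bullet)$ coincides with this integral.

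For (4), $\vol_\chi \leq \vol$ is immediate from $\widehat\mu_i \leq \max(\widehat\mu_i, 0)$; for the reverse, the defect $\sum_i \max(\widehat\mu_i, 0) - \sum_i \widehat\mu_i \leq r_n \cdot \max(-\mumin(\ovl E_n), 0)$ becomes negligible after division by $n^{d+1}/(d+1)!$ along a subsequence realizing $\mumin^{\sup}(\ovl E_\bullet) \geq 0$. For (5), given $s \in E_n^{(1)}$ and $s' \in E_n^{(2)}$ whose primed norms are bounded respectively by $e^{-n t_1}$ and $e^{-n t_2}$, the product $s s' \in E_{2n}^{(3)}$ has $v(s s')/(2n) = (x+y)/2$ (where $x = v(s)/n$, $y = v(s')/n$) and primed norm at most $e^{-n(t_1 + t_2)}$ up to polynomial corrections that vanish asymptotically, yielding the stated inequality under the standard Minkowski rescaling convention of $\Delta^{(3)}$. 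The principal obstacle throughout is (3), where the transfer of the Okounkov-body lattice count from its classical geometric setting to the trivially valued ultrametric filtered one relies crucially on $\delta$-superadditivity to identify $\Delta^t(E_\bullet)$ with the Okounkov body of an honest graded subalgebra.
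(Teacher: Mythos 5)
The paper does not actually prove this proposition: its ``proof'' is a pointer to \cite[Section 6.3.4]{adelic} for (1), (2), (3), (5), with (4) deduced from (3). Your sketch reconstructs the argument of that cited source --- the Boucksom--Chen concave-transform method transported to the trivially valued, ultrametric setting --- so in substance you are carrying out the proof the paper delegates. Your key identifications are correct: over a trivially valued field the Harder--Narasimhan filtration of $(E_n,\nm_n)$ is the norm filtration $\mathcal F^t_{hn}\ovl E_n=\{s:\lVert s\rVert_n\leq e^{-t}\}$, so $\sum_i\widehat\mu_i(\ovl E_n)$ has the layer-cake expression you use, and (1), (2), (5) do reduce to Proposition \ref{prop_asy_norm} by the power/product manipulations you describe.

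Three points would need more care in a full write-up. In (3), the statement $\lim_n n^{-d}\dim_K\mathcal F^{nt}_{hn}\ovl E_n=\mathrm{vol}(\{G_{\ovl E_\bullet}\geq t\})$ for a.e.\ $t$ is the technical core: Proposition \ref{prop_asy_norm}(4) compares $\lVert\cdot\rVert'_n$ with $\lVert\cdot\rVert_n$ in one direction only, so one inclusion between the two filtrations is free while the other requires the equidistribution part of the Okounkov-body volume theorem applied to the graded subalgebras $R^t=\bigoplus_n\{s:\lVert s\rVert'_n\leq e^{-nt}\}$, and one must check these satisfy that theorem's hypotheses for a.e.\ $t$ below $\mumax^{\mathrm{asy}}(\ovl E_\bullet)$; your sketch only gestures at this. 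In (5), your element $ss'\in E^{(3)}_{2n}$ yields the point $(x+y)/2$ in $\Delta^{(t_1+t_2)/2}$, not $x+y$ in $\Delta^{t_1+t_2}$; the mismatch traces to the paper's own statement, whose two displayed hypotheses use incompatible gradings ($E^{(1)}_n\cdot E^{(2)}_m\subset E^{(3)}_{n+m}$ versus a norm inequality indexed by a single $n$). The version needed for the application $H^0(nL_1)\cdot H^0(nL_2)\subset H^0(n(L_1+L_2))$ is the equal-degree one, under which your product argument gives $x+y\in\Delta^{t_1+t_2}(E^{(3)}_\bullet)$ with no rescaling fudge. Finally, in (4) the subsequence realizing $\mumin^{\sup}(\ovl E_\bullet)\geq 0$ only yields $\vol_\chi\geq\vol$ once one knows the normalized positive degrees converge along all of $\N$ (so that the subsequence still computes $\vol$); this holds because $\mathrm{vol}(E_\bullet)>0$, but it should be stated.
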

\begin{proof}
    We refer the reader to \cite[6.3.4]{adelic} for the proof of (1)(2)(3)(5). (4) is due to (3) and definition. 
\end{proof}
\subsection{Slope boundedness}
\begin{prop}\label{prop_fin_gen_bound}
Let $\overline E_\bullet=\{\overline E_n=(E_n,\nm_n)\}_{n\in\N}$ be an ultrametrically normed graded linear series over $(K,\lvert\cdot\rvert_0)$ satisfying $\delta$-superadditivity. If $E_\bullet=\{E_n\}_{n\in\N}$ is of finite type, then it holds that
$$\mumin^{\inf}(\overline E_\bullet)>-\infty$$
\end{prop}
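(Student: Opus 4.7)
The plan is to exploit finite generation of the graded $K$-algebra $\bigoplus_{n \in \N} E_n Y^n$ in order to reduce every element of $E_n$ to a $K$-linear combination of monomials in a fixed set of homogeneous generators, and then apply $\delta$-superadditivity in a single step to each monomial to bound its norm exponentially in $n$.

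First I would choose homogeneous generators $s_1, \ldots, s_m$ of $\bigoplus_{n\in\N} E_n Y^n$ over $K$, with $s_i \in E_{d_i}$ and $d_i \geq 1$, and set
$$M := \max\Bigl(1,\; \max_{1 \le i \le m} \lVert s_i\rVert_{d_i}^{1/d_i} \cdot \dim_K(E_{d_i})^{C/d_i}\Bigr).$$
For any nonzero $s \in E_n$ I would write $s = \sum_\alpha c_\alpha s^\alpha$, where each monomial $s^\alpha = s_1^{a_1}\cdots s_m^{a_m}$ has weighted degree $\sum_i a_i d_i = n$ and $c_\alpha \in K$. Since the absolute value on $K$ is trivial and $\lVert\cdot\rVert_n$ is ultrametric, $\lVert s\rVert_n \le \max_\alpha \lVert s^\alpha\rVert_n$, so it suffices to control each $\lVert s^\alpha\rVert_n$.

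Next, applying $\delta$-superadditivity in a single step to the monomial $s^\alpha$, viewed as a product of $\sum_i a_i$ factors (with each $s_i$ appearing $a_i$ times), gives
$$\lVert s^\alpha\rVert_n \le \prod_{i=1}^m \lVert s_i\rVert_{d_i}^{a_i}\cdot\prod_{i=1}^m \dim_K(E_{d_i})^{C a_i} = \prod_{i=1}^m\Bigl(\lVert s_i\rVert_{d_i}^{1/d_i}\cdot\dim_K(E_{d_i})^{C/d_i}\Bigr)^{a_i d_i} \le M^{\sum_i a_i d_i} = M^n.$$
Consequently $\lVert s\rVert_n \le M^n$ for every $s \in E_n$, and since on a trivially valued field one has $\mumin(\overline E_n) = -\ln\max_{s \in E_n\setminus\{0\}} \lVert s\rVert_n$, this yields $\mumin(\overline E_n) \ge -n\ln M$, whence $\mumin^{\inf}(\overline E_\bullet) \ge -\ln M > -\infty$.

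The one delicate point is the bookkeeping around the dimension factors $\dim_K(E_{d_i})^{C a_i}$ produced by $\delta$-superadditivity: although the total number of factors $\sum_i a_i$ in a monomial of degree $n$ grows linearly with $n$, the per-generator constants $\lVert s_i\rVert_{d_i}^{1/d_i}\cdot\dim_K(E_{d_i})^{C/d_i}$ involve only the finitely many fixed generators and can be uniformly dominated by $M$; the rescaling by $d_i$ then converts the exponent $\sum_i a_i$ into $\sum_i a_i d_i = n$, which is exactly what is needed. This is the step where finite type (rather than merely subfinite type) is used in an essential way, since otherwise $M$ could not be taken to be finite.
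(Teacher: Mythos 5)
Your proof is correct and follows essentially the same route as the paper: both exploit finite generation to write every element of $E_n$ as a combination of products of finitely many fixed low-degree pieces and then apply $\delta$-superadditivity (in its intended direction, $\lVert s_1\cdots s_r\rVert_{n_1+\cdots+n_r}\leq \prod_i\lVert s_i\rVert_{n_i}\prod_i\dim_K(E_{n_i})^C$) in one step to obtain a lower bound on $\mumin(\overline E_n)$ that is linear in $n$. The only cosmetic difference is that you bound norms of monomials in chosen generators directly, using $\mumin(\overline E_n)=-\ln\max_{s\in E_n}\lVert s\rVert_n$, whereas the paper packages the same estimate as a surjection of operator norm at most one from an orthogonal direct sum of tensor products of the spaces $\overline E_1,\dots,\overline E_N$ onto $\overline E_n$.
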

\begin{proof}
We fix a set of generators, and assume that they are of degree at most $N$.
Then for any $n\in\N$, we set $$\Psi_n:=\{(\lambda_1,\lambda_2,\cdots,\lambda_r)\mid 1\leqslant\lambda_1\leqslant\lambda_2\leqslant\cdots\leqslant\lambda_r\leqslant N, \sum_{i=1}^r \lambda_i=n, r=1,\cdots, n\}.$$
Then the map $$\bigoplus^\perp_{(\lambda_1,\lambda_2,\cdots,\lambda_r)\in\Psi_n}(\otimes_{i=1}^r (E_{\lambda_i},\nm_{\lambda_i}\dim_K(E_n)^C)\longrightarrow \overline E_n$$
is surjective and of operator norm $\leqslant 1$. Therefore we have $$\mumin(\overline E_n)\geqslant \min\limits_{(\lambda_1,\lambda_2,\cdots,\lambda_r)\in\Psi_n}\{\sum_{i=1}^r (\mumin(\overline E_{\lambda_i})-\delta(\lambda_i))\}.$$
Then $$\frac{\mumin(\overline E_n)}{n}\geqslant \min\left\{\displaystyle\frac{\mumin(\overline E_l)-\delta(l)}{l}\right\}_{1\leqslant l\leqslant N}$$ for every $n\in \N$.
\end{proof}
\begin{comment}
\begin{prop}\label{prop_big_bound}
Let $\overline E_\bullet=\{\overline E_n=(E_n,\nm_n)\}_{n\in\N}$ be an ultrametrically normed graded linear series satisfying $\delta$-superadditivity. If $\mathrm{vol}(E)>0$. Then it holds that
$$\mumax^{\mathrm{asy}}(E_\bullet)<+\infty$$
\end{prop}
\begin{proof}
This is due to \cite{adelic}
\end{proof}
\end{comment}
\subsection{Multiple graded linear series}\label{subsect_multi_lin_ser}
Now let $L_1,\cdots, L_r$ be line bundles over $X$. 
For any $a=(a_1,a_2,\dots,a_r)\in \N^r$, set that \[\begin{cases}\displaystyle a\cdot\textnormal{\textbf{L}}:=\sum_{i=1}^r a_i L_i,\\
\displaystyle a\cdot \varphi:=\sum_{i=1}^r a_i \varphi_i,\\
|a|:=a_1+a_2+\cdots+a_r.
\end{cases}
\]

We may choose suitable local bases of $L_1,\cdots, L_r$ at the regular rational point $p\in X$, so that we can identify $H^0(X,a\cdot \textbf{L})$ with subspaces in $K(X)$. 
We equip each $H^0(X,a\cdot \textbf{L})$ an ultrametric norm $\nm_{(a)}$ over trivially valued field such that for any $a^{(1)},\cdots a^{(r)}\in\N^r$, we have the inequality 
$$\lVert u_1\cdots u_l\rVert_{(a^{(1)}+\cdots+a^{(l)})}\leqslant \prod_{i=1}^l\lVert u_j\rVert_{(a^{(j)})}\dim_K(H^0(X,a^{(j)}\cdot\textbf{L}))^C$$ where $u_j\in H^0(X, a^{(j)}\cdot\textbf{L})$. 
$$\lVert u\rVert_{(a)}\leqslant \prod_{i=1}^l\lVert u_j\rVert_{(a^{(j)})}\dim_K(H^0(X,a^{(j)}\cdot\textbf{L}))^C$$
Let $E=\bigoplus_{i=1}^r L_i$, $\mathbb{P}(E):=\mathrm{\mathcal {P}roj_X}(\mathrm{Sym}(E))$ the projective bundle on $X$ associated with $E$. Let $H=\mathcal O_{\mathbb{P}(E)}(1)$.
Then $$H^0(\mathbb{P}(E),mH)=\bigoplus\limits_{a_1+\cdots+a_r=m} H^0(X,\sum_{j=1}^r a_i L_i)$$
Let $\nm_m$ be the direct sum of $\{\nm_{(a)}\mid a\in \N^r,|a|=m\}$.
\begin{prop}
The normed graded algebra $$\bigoplus_{m\geqslant 0}(H^0(\mathbb{P}(E),mH),\nm_m)$$ is strong $\delta$-supperadditive. 
\end{prop}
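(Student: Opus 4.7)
The plan is to reduce the $\delta$-superadditivity on $\bigoplus_m H^0(\mathbb{P}(E),mH)$ to the multi-index superadditivity that was imposed on the families $\{(\nm_{(a)})\}_{a\in\N^r}$ in the preceding paragraph, using the fact that the graded piece $H^0(\mathbb{P}(E),mH)$ is, up to the orthogonal direct sum norm, nothing but the coproduct of the spaces $H^0(X,a\cdot\mathbf{L})$ with $|a|=m$.

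First I would fix $s_1\in H^0(\mathbb{P}(E),m_1H),\dots,s_l\in H^0(\mathbb{P}(E),m_lH)$ and decompose each according to the direct sum, $s_j=\sum_{|a^{(j)}|=m_j}s_{j,a^{(j)}}$ with $s_{j,a^{(j)}}\in H^0(X,a^{(j)}\cdot\mathbf{L})$. By definition of the direct sum norm, $\|s_j\|_{m_j}=\max_{a^{(j)}}\|s_{j,a^{(j)}}\|_{(a^{(j)})}$. Expanding the product and regrouping by multi-degree, the component of $s_1\cdots s_l$ in $H^0(X,b\cdot\mathbf{L})$ is
\[
(s_1\cdots s_l)_b=\sum_{a^{(1)}+\cdots+a^{(l)}=b}s_{1,a^{(1)}}\cdots s_{l,a^{(l)}}.
\]
Applying the ultrametric triangle inequality on $H^0(X,b\cdot\mathbf{L})$ gives
\[
\|(s_1\cdots s_l)_b\|_{(b)}\le \max_{a^{(1)}+\cdots+a^{(l)}=b}\|s_{1,a^{(1)}}\cdots s_{l,a^{(l)}}\|_{(b)}.
\]

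Next, I invoke the hypothesis on the family $\{\nm_{(a)}\}$, which bounds each summand by $\prod_j\|s_{j,a^{(j)}}\|_{(a^{(j)})}\dim_K(H^0(X,a^{(j)}\cdot\mathbf{L}))^C$. Two monotonicity observations then finish the estimate: $\|s_{j,a^{(j)}}\|_{(a^{(j)})}\le\|s_j\|_{m_j}$, and $\dim_K(H^0(X,a^{(j)}\cdot\mathbf{L}))\le\dim_K(H^0(\mathbb{P}(E),m_jH))$ since the former is a direct summand of the latter. Combining, each component in degree $b$ is bounded by $\prod_j\|s_j\|_{m_j}\dim_K(H^0(\mathbb{P}(E),m_jH))^C$; taking the max over $b$ yields the same bound for $\|s_1\cdots s_l\|_{m_1+\cdots+m_l}$, which is exactly the $\delta$-superadditivity with $\delta(m)=C\ln\dim_K(H^0(\mathbb{P}(E),mH))$.

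The argument is essentially bookkeeping, so there is no real obstacle; the only delicate point is making sure the ultrametric property really holds for the direct sum norm (which is immediate since the max of ultrametric norms is ultrametric) so that we may distribute the sum over multi-degrees and invoke the non-Archimedean triangle inequality at the crucial step. The inequality $\dim_K(H^0(X,a\cdot\mathbf{L}))\le\dim_K(H^0(\mathbb{P}(E),mH))$ for $|a|=m$ is what allows us to replace the multi-index $\delta$ by the single-index $\delta$ associated to the algebra on $\mathbb{P}(E)$, which is what the definition of $\delta$-superadditivity requires.
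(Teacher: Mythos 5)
Your proof is correct and follows essentially the same route as the paper's: decompose each $s_j$ into its multi-graded components, use that the direct sum norm is a maximum, apply the ultrametric inequality to the component of the product in each degree, invoke the multi-index superadditivity of $\{\nm_{(a)}\}$, and absorb $\dim_K H^0(X,a^{(j)}\cdot\mathbf{L})$ into $\dim_K H^0(\mathbb{P}(E),m_jH)$. No substantive difference from the paper's argument.
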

\begin{proof}
Let $s_i=\sum\limits_{|a|=m_i} u_a^{(i)}\in H^0(\mathbb{P}(E),m_iH)$ where $i=1,2,\dots,l$, $m_i\in \N$, $u_{a}^{(i)}\in H^0(X,a\cdot \textbf{L})$.
It holds that
$$\lVert s_i\rVert_{m_i}=\max_{|a|=m_i}\{\lVert u_a^{(i)}\rVert_{(a)}\}.$$
Moreover,
$${\begin{aligned}\allowdisplaybreaks\lVert s_1\cdots s_l\rVert_m&=\max\limits_{|\alpha|=m}\left\{\left\lVert \sum\limits_{\substack{a^{(1)}+\cdots+a^{(l)}=a\\ |a^{(i)}|=m_i}}\prod_{i=1}^l u_{a^{(i)}}^{(i)}\right\rVert_{(\alpha)}\right\}\\
&\leqslant\max_{\substack{|a^{(i)}|=m_i\\i=1,\dots,l}}\left\lVert \prod_{i=1}^l u_{a^{(i)}}^{(i)}\right\rVert_{(a^{(1)}+a^{(2)}+\cdots+a^{(l)})}\\
&\leqslant \max_{\substack{|a^{(i)}|=m_i\\i=1,\dots,l}}\prod_{i=1}^l\left\lVert u_{a^{(i)}}^{(i)}\right\rVert_{(a^{(i)})}\dim_K(H^0(X,a^{(i)}\cdot \textbf{L}))^C\\
&\leqslant \prod_{i=1}^l{\max_{|a|=m_i}\{\lVert u_{a}^{(i)}\rVert_{(a)}\dim_K(H^0(X,a\cdot \textbf{L}))^C}\}\\
&\leqslant \prod_{i=1}^l\lVert s_i\rVert_{m_i}\dim_K(H^0(\mathbb{P}(E),m_iH))^C.
\end{aligned}}$$
\end{proof}
\begin{coro}\label{coro_semiample_min_slop}
If $L_1,\cdots,L_r$ are semiample, then there exists constants $S_{\min},T_{\min}\in\R$ such that
$$\mumin(H^0(X,a\cdot{\bf L}),\nm_{(a)})\geq S_{\min}\lvert a\rvert +T_{\min}.$$
\end{coro}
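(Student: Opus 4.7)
The strategy is to reduce the multigraded claim to the single-graded slope boundedness of Proposition~\ref{prop_fin_gen_bound}, by packaging everything as the section ring of a single line bundle on the projective bundle $\mathbb{P}(E)$.

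First, I would establish that $H = \mathcal{O}_{\mathbb{P}(E)}(1)$ is semiample on $\mathbb{P}(E)$. Since each $L_i$ is semiample, there exists $k \in \N_+$ such that $kL_i$ is globally generated for every $i$. Using the standard identification $\pi_*(kH) = \mathrm{Sym}^k(E) = \bigoplus_{|a|=k} a\cdot\mathbf{L}$, we see that $\pi_*(kH)$ is a direct sum of globally generated line bundles on $X$; combined with the surjection $\pi^*\pi_*(kH) \twoheadrightarrow kH$, this shows that $kH$ is globally generated, so $H$ is semiample. Consequently the section ring
$$\bigoplus_{m \geq 0} H^0(\mathbb{P}(E), mH) \;=\; \bigoplus_{a \in \N^r} H^0(X, a\cdot\mathbf{L})$$
is a finitely generated $K$-algebra, so the graded linear series $\{H^0(\mathbb{P}(E), mH)\}_{m \in \N}$ is of finite type (after identifying its elements with rational functions on $\mathbb{P}(E)$ by fixing a rational section of $H$).

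Next, I would invoke Proposition~\ref{prop_fin_gen_bound}. The preceding proposition in this subsection has already shown that $\{(H^0(\mathbb{P}(E), mH), \nm_m)\}_{m}$ is $\delta$-superadditive, so combined with finite type we obtain $\mumin^{\inf} > -\infty$. Moreover, inspecting the proof of Proposition~\ref{prop_fin_gen_bound} (the inequality $\mumin(\overline E_n)/n \geq \min_{1 \leq l \leq N}\{(\mumin(\overline E_l) - \delta(l))/l\}$) yields a concrete constant $S \in \R$ with $\mumin(H^0(\mathbb{P}(E), mH), \nm_m) \geq Sm$ for every $m \geq 1$.

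Finally, by construction $\nm_m = \bigoplus^{\perp}_{|a|=m} \nm_{(a)}$, and the earlier remark on orthogonal direct sums identifies the minimal slope of such a direct sum with the minimum of the minimal slopes of the summands. Therefore, for every $a \in \N^r$ with $|a| \geq 1$,
$$\mumin\bigl(H^0(X, a\cdot\mathbf{L}), \nm_{(a)}\bigr) \;\geq\; \mumin\bigl(H^0(\mathbb{P}(E), |a|H), \nm_{|a|}\bigr) \;\geq\; S|a|,$$
and a sufficiently negative additive constant $T_{\min}$ absorbs the degenerate case $a = 0$ (where $H^0(X, 0) = K$). The only conceptually nontrivial step is the first: verifying semiampleness of $H$ on $\mathbb{P}(E)$ so that finite generation of the multigraded ring becomes available, and Proposition~\ref{prop_fin_gen_bound} can be applied.
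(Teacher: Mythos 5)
Your overall strategy is exactly the paper's: semiampleness of $H=\mathcal{O}_{\mathbb{P}(E)}(1)$ gives finite type of the section ring, Proposition \ref{prop_fin_gen_bound} together with the $\delta$-superadditivity established just before gives a linear lower bound on $\mumin(H^0(\mathbb{P}(E),mH),\nm_m)$, and the orthogonal direct sum decomposition $\nm_m=\bigoplus^{\perp}_{|a|=m}\nm_{(a)}$ transfers that bound to each summand. Those last two steps are correct as you state them.

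However, your justification of the semiampleness of $H$ --- the step you yourself single out as the only nontrivial one --- rests on a false intermediate claim. From ``$kL_i$ is globally generated for every $i$'' you conclude that every summand $a\cdot\mathbf{L}$ of $\pi_*(kH)=\mathrm{Sym}^k(E)=\bigoplus_{|a|=k}a\cdot\mathbf{L}$ is globally generated. This fails for mixed indices: take $X$ an elliptic curve, $L_1$ a nontrivial $2$-torsion line bundle and $L_2$ a nontrivial $3$-torsion line bundle, so $k=6$ works for both; then $3L_1+3L_2\cong L_1$ has no nonzero global sections, hence is not globally generated. In this example $\pi_*(NH)$ is not globally generated for any $N\geq 1$, so the route through global generation of the pushforward cannot be saved by enlarging $k$. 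The conclusion is nevertheless true and can be proved directly: for $N$ a multiple of $k$, the ``pure'' subspaces $H^0(X,NL_i)\subset H^0(\mathbb{P}(E),NH)$ already have no common base point, because at a point of $\mathbb{P}(E)$ lying over $x\in X$ and corresponding to a rank-one quotient $q:E_x\to Q$, some restriction $q|_{(L_i)_x}$ is nonzero, and any section of $NL_i$ not vanishing at $x$ then induces a section of $NH$ not vanishing at that point. Hence $NH$ is globally generated and $H$ is semiample; with this repair the remainder of your argument goes through and coincides with the paper's proof.
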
 
\begin{proof}
By construction, $H$ is semiample due to the semiampleness of $L_1,\cdots,L_r$. Therefore we conclude the proof by using Proposition \ref{prop_fin_gen_bound}.
\end{proof}
\begin{comment}
\begin{coro}
If $L_1,\cdots,L_r$ are big, then there exists constants $S_{\max},T_{\max}\in\R$ such that
$$\mumax(H^0(X,a\cdot{\bf L}),\nm_{(a)})\leq S_{\max}\lvert a\rvert +T_{\max}.$$
\end{coro}
\begin{proof}
We first show that $H$ is big. By homogeneity we may assume that $h^0(X,L_i)\not=0$. 
Notice that for any $a=(a_1,\dots,a_r)\in\N^r$ such that $a_1+\cdots+a_r=m$, there exists $i\in \{1,\dots, r\}$ such that $a_i\geq \lfloor m/r\rfloor$.
Hence we have the following estimate:
\begin{align*}\allowdisplaybreaks
    h^0(\mathbb{P}(E),mH)&=\sum\limits_{a_1+\cdots+a_r=m} h^0(X,a_1 L_1+\cdots +a_r L_r)\\
    &\geq \sum\limits_{a_1+\cdots+a_r=m} \min\{h^0(X,\lfloor m/r\rfloor L_i)\mid 1\leq i\leq r \}\\
    &=\binom{m+r-1}{r-1}\min\{h^0(X,\lfloor m/r\rfloor L_i)\mid 1\leq i\leq r \}.
\end{align*}
Therefore $$\lim_{m\rightarrow+\infty}\frac{h^0(\mathbb{P}(E),mH)}{m^{d+r-1}}\geq \frac{1}{r^d d!}\min_{1\leq i\leq r} \mathrm{vol}(L_i)$$
which implies that $H$ is big. 
\end{proof}
\end{comment}
\section{Arithmetic $\chi$-volume and adelic line bundles}\label{sec_adelic_lin}
In this section, we fix the following setting:
\begin{enumerate}
    \item[\textnormal(i)] $K$ is perfect.
    \item[\textnormal(ii)] $K$ is countable or the $\sigma$-algebra $\mathcal A$ is discrete.
    \item[\textnormal(iii)] For every $\omega\in\Omega_\infty$, $\lvert\cdot\rvert_\omega=\lvert\cdot\rvert$ on $\Q$.
    \item[\textnormal(iv)] There exists a measurable subset $\Omega'\in\mathcal A$ such that $\nu(\Omega')\not\in\{0,+\infty\}$.
    \item[\textnormal(v)] $X\rightarrow \mathrm{Spec}K$ is projective and reduced.
\end{enumerate}
\subsection{Adelic line bundles}
Let $H$ be an very ample line bundle whose global sections $E:=H^0(X,H)$ is equipped with a dominated norm family $\xi=\{\nm_{\omega}\}_{\omega\in\Omega}$ i.e. $\overline E=(E,\xi)$ is an adelic vector bundle. For each place $\omega\in\Omega$, we denote by $\xanomega$ the analytification of $X\times_{\mathrm{Spec}K}\mathrm{Spec} K_\omega$ in the sense of Berkovich(see \cite{Berkovich}). As in \cite[2.2.3]{adelic}, the norm $\nm_{\omega}$ induces a \textit{Fubini-Study metric} family $\varphi_{FS, \overline E,\omega}=\{\lvert\cdot\rvert_{\omega}(x)\}$, which is continuous in the sense that for any open subset $U\subset X$ and a section $s\in H^0(X,U)$, the function 
$$x\in U^{an}_\omega\mapsto \lvert s\rvert_\omega(x)$$
is continuous with respect to Berkovich topology, where $U^{an}_\omega$ is the analytification of $U\times_{\mathrm{Spec}K}\mathrm{Spec} K_\omega$.

For arbitrary line bundle $L$, and two continuous metric families $\varphi=\{\varphi_\omega\}$, $\varphi'=\{\varphi'_\omega\}$, we define the distance function $$\operatorname{dist}(\varphi,\varphi'):(\omega\in\Omega)\mapsto\sup_{x\in \xanomega}\lvert1\rvert_{\varphi_{\omega}-\varphi'_{\omega}}(x)$$
where $\lvert1\rvert_{\varphi_{\omega}-\varphi'_{\omega}}(x)$ is a continuous function on $X$ since $\varphi_\omega-\varphi'_\omega$ is a continous metric of $\O_X$.

We say the pair $(H,\varphi)$ of a very ample line bundle and a continuous metric family is an \textit{adelic very ample line bundle} if $\varphi$ is measurable (see \cite[ 6.1.4]{adelic}), and there exists an dominated norm family $\xi$ on the global sections $E=H^0(X,H)$ such that the distance function
$\operatorname{dist}(\varphi,\varphi_{FS,\overline E})$ is $\nu$-dominated.

As the very ample line bundles generates the Picard group $\mathrm{Pic}(X)$, we define the \textit{arithmetic Picard group} $\widehat{\mathrm{Pic}}(X)$ be the abelian group generated by all adelic very ample line bundles. An element in $\widehat{\mathrm{Pic}}(X)$ is called an \textit{adelic line bundle}.

\begin{defi}
    Let $\ovl L=(L,\phi)$ be an adelic line bundle. We say $\ovl L$ is \textit{ample, semiample, nef,} or \textit{big} if so is $L$. We say $\ovl L$ is \textit{semipositive} if for every $\omega\in\Omega$, $\phi_\omega$ is semipositive. We say an adelic line bundle $\ovl L$ is \textit{integrable} if we can write it as $\ovl L=\ovl L_1-\ovl L_2$ where $\ovl L_1$ and $\ovl L_2$ are ample and semipositive adelic line bundles. The group of integrable adelic line bundles is denoted as $\widehat{\mathrm{Int}}(X)$.
\end{defi}
We refer the reader to \cite[4.4.4]{chen2021intersection} for the following:
\begin{prop}
    There exists an intersection pairing on the group $\widehat{\mathrm{Int}}(X)$:
    $$(\ovl L_0,\dots, \ovl L_{d})\in\widehat{\mathrm{Int}}(X)^{d+1}\mapsto \widehat c_1(\ovl L_0)\cdots\widehat c_1(\ovl L_{d})$$
    which is a multilinear and symmetric form.
\end{prop}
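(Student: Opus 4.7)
The plan is to construct the intersection pairing first on the monoid of ample semipositive adelic line bundles, verify symmetry and multilinearity there, and then extend by multilinear expansion to $\widehat{\mathrm{Int}}(X)$. The construction proceeds in two layers: a local (place-by-place) definition based on the analytic theory of Monge-Amp\`ere operators, and a global integration over the measure space $(\Omega,\mathcal A,\nu)$.

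At each $\omega\in\Omega$, given ample semipositive metrized tuples $(L_0,\phi_{0,\omega}),\ldots,(L_d,\phi_{d,\omega})$, I would choose a rational section $s_0$ of $L_0$ whose Cartier divisor $Y$ meets the remaining classes properly, and define the local intersection number inductively by
$$(\widehat c_1(\ovl L_0)\cdots\widehat c_1(\ovl L_d))_\omega := (\widehat c_1(\rest{\ovl L_1}{Y})\cdots\widehat c_1(\rest{\ovl L_d}{Y}))_\omega - \int_{\xanomega}\ln|s_0|_{\phi_{0,\omega}}\,c_1(\ovl L_1)_{\omega}\wedge\cdots\wedge c_1(\ovl L_d)_{\omega},$$
where the wedge is the Bedford-Taylor product for Archimedean $\omega$ and the Chambert-Loir measure for non-Archimedean $\omega$. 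Independence of the choice of $s_0$ at each fixed $\omega$ is standard. One then sets
$$\widehat c_1(\ovl L_0)\cdots\widehat c_1(\ovl L_d) := \int_{\Omega}(\widehat c_1(\ovl L_0)\cdots\widehat c_1(\ovl L_d))_\omega\,\nu(d\omega),$$
and properness of the adelic curve (the product formula) ensures independence of the choice of $s_0$ globally.

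Symmetry and multilinearity on the ample semipositive cone are first checked for algebraic/model metrics, where they reduce to classical statements about intersection numbers on arithmetic or geometric models, and then propagated to continuous semipositive metrics by approximation together with dominated convergence. Once multilinearity and symmetry are established on the cone, extending to $\widehat{\mathrm{Int}}(X)$ is routine: each entry is written as a difference $\ovl L_i = \ovl L_i^+-\ovl L_i^-$ of ample semipositive bundles and the product expanded; well-definedness of the extension follows tautologically from multilinearity on the positive cone.

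The main obstacle is the global measurability and $\nu$-integrability of the local intersection function $\omega\mapsto(\widehat c_1(\ovl L_0)\cdots\widehat c_1(\ovl L_d))_\omega$. Measurability would follow from approximating general semipositive metrics by Fubini-Study metrics induced by dominated adelic norm families on $H^0(X,mH)$, whose measurability is part of the adelic data. Integrability relies essentially on the dominancy condition: the recursive formula expresses the local intersection as an integral of $\ln|s_0|_{\phi_{0,\omega}}$ against a positive measure of total mass $\mathrm{deg}(L_1\cdots L_d)$, so dominancy of $\phi_{0,\omega}$ bounds the result by a $\nu$-integrable function on $\Omega$. This is in the same spirit as the integrability checks used to control the Arakelov degree of adelic vector bundles in the earlier sections, and supplies the key technical input.
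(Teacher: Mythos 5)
First, a point of comparison: the paper does not actually prove this proposition --- it is imported wholesale from the cited reference (Chen--Moriwaki, \emph{Arithmetic intersection theory over adelic curves}, 4.4.4), so there is no in-paper argument to measure you against. Your sketch is essentially a reconstruction of the strategy of that reference: define local intersection numbers place by place by the inductive Gubler/Chambert-Loir formula, integrate over $\Omega$, invoke the product formula for global well-definedness, and isolate the measurability and $\nu$-integrability of $\omega\mapsto(\widehat c_1(\ovl L_0)\cdots\widehat c_1(\ovl L_d))_\omega$ as the main technical burden. That identification of the key difficulty is correct, and the multilinear extension from the ample semipositive cone to $\widehat{\mathrm{Int}}(X)$ by writing $\ovl L_i=\ovl L_i^{+}-\ovl L_i^{-}$ is indeed routine once additivity in each variable is known on the cone.

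Two steps, however, do not survive as written. (1) The claim that the local number is ``independent of the choice of $s_0$ at each fixed $\omega$'' is false: replacing $s_0$ by $f s_0$ with $f\in K(X)^{\times}$ changes the local quantity by the local height of $\mathrm{div}(f)$ paired against $\ovl L_1,\dots,\ovl L_d$ at $\omega$, which is nonzero in general (already multiplying by a constant $\lambda\in K^{\times}$ shifts it by $-\ln\lvert\lambda\rvert_\omega\,\deg(L_1\cdots L_d)$). Only the integral over $\Omega$ is independent of the sections, and that is precisely where properness enters; your text asserts both local and global independence, and the former would render the latter vacuous. The local numbers must be defined relative to a fixed family of sections $s_0,\dots,s_d$ intersecting properly, and one must work with cycles, since $Y=\mathrm{div}(s_0)$ need not be reduced or irreducible when $X$ is only assumed reduced. (2) Symmetry is the genuinely delicate verification and cannot be dispatched by ``check for model metrics and approximate'': over a general adelic curve there is no global model, and even at a single non-Archimedean, non-discretely-valued place the symmetry of the local pairing requires a substantive argument; moreover the passage from a dense subclass of metrics to all continuous semipositive ones needs a uniform continuity estimate of the local pairing in the metrics, not merely pointwise dominated convergence in $\omega$. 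Neither objection means the architecture is wrong --- it is the right one --- but as it stands the proposal contains one false intermediate claim and leaves the hardest verifications (symmetry, and the $\mathcal A$-measurability of the local pairing as a function of $\omega$) at the level of assertion.
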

\subsection{Arithmetic $\chi$-volume} Let $\ovl L=(L,\phi)$ be an adelic line bundle. The map \[\nm_{\phi_\omega} : H^0(X,L)\ot K_\omega \to \R\] given by
$\displaystyle{s \mapsto \sup_{x\in \xanomega}\lvert s\rvert_{\phi_\omega}(x)}$
%$$\nm_{\varphi_\omega}:(s\in H^0(X,L)\ot K_\omega)\mapsto \sup_{x\in \xanomega}\lvert s\rvert_{\varphi_\omega}(x)$$
is a norm on $H^0(X,L)\ot K_\omega$. Denote by $\xi_{\phi}$ the norm family $\{\nm_{\varphi_\omega}\}$. The \textit{arithmetic pushforward} of $\ovl L$ is defined as $\pi_*(\ovl L)=(H^0(X,L),\xi_{\phi})$, which is an adelic vector bundle due to \cite[Theorem 6.2.18]{adelic}. The $\chi$-volume function $\vol_\chi(\cdot)$ and volume function $\vol(\cdot)$ are defined as
$$\vol_\chi(\overline L):=\limsup_{n\rightarrow +\infty}\frac{\ardeg(H^0(X,nL),\xi_{n\varphi})}{n^{d+1}/(d+1)!},$$
and 
$$\vol(\overline L):=\limsup_{n\rightarrow +\infty}\frac{\ardeg_+(H^0(X,nL),\xi_{n\varphi})}{n^{d+1}/(d+1)!}.$$
We also define the following asymptotic invariants:
\[\begin{cases}
    \displaystyle\mumax^{\mathrm{asy}}(\ovl L):=\limsup_{n\rightarrow+\infty}\frac{\mumax(\pi_*(n\ovl L))}{n},\\
    \displaystyle\mumin^{\inf}(\ovl L):=\liminf_{n\rightarrow+\infty}\frac{\mumin(\pi_*(n\ovl L))}{n},\\
    \displaystyle\mumin^{\sup}(\ovl L):=\limsup_{n\rightarrow+\infty}\frac{\mumin(\pi_*(n\ovl L))}{n}.
\end{cases}\]
\begin{rema}
    For each $n\in\N$, let $\nm_n$ be the ultrametric norm induced by the Harder-Narasimhan filtration of $\pi_*(n\ovl L)$. Then $\{(H^0(X,nL),\nm_n)\}_{n\in\N}$ is a $\delta$-superadditive ultrametrically normed graded linear series. By Proposition \ref{prop_succ_slope}, we can see that our definitions above actually coincides with Definition \ref{def_asymp}. It is worth noting that $\mumax^{\mathrm{asy}}(\ovl L)<+\infty$ due to \cite[Proposition 6.4.4]{adelic}. Moreover, consider adelic line bundles $\ovl L_1,\dots,\ovl L_r$ and $a=(a_1,\dots, a_r)\in\N^{r}$. Let $\nm_{(a)}$ be the ultrametric norm associated to the Harder-Narasimhan filtration of $\pi_*(a_1\ovl L_1+\cdots+a_r\ovl L_r)$. Then $\{H^0(a\cdot {\bf L},\nm_{(a)})\}_{a\in\N^r}$ satisfies the conditions in \ref{subsect_multi_lin_ser}. Therefore Corollary \ref{coro_semiample_min_slop} can be restated as follows:
    Assume $L_1,\cdots, L_r$ are semiample, then there exists constants $S,T$ such that 
    $$\mumin(\pi_*(a_1\ovl L_1+\cdots+a_r\ovl L_r))\geq S\lvert a\rvert+T.$$
\end{rema}
\begin{prop}\label{prop_vol_chi}
    Let $\ovl L$ be an adelic line bundle.
    \begin{enumerate}
        \item[\textnormal{(i)}] Let $h$ be a $\nu$-integrable function on $\Omega$. Then
        \[
            \begin{cases}
            \displaystyle \vol_\chi(\ovl L(h))=\vol_\chi(\ovl L)+(d+1)\mathrm{vol}(L)\int_\Omega h\nu(d\omega),\\
            \displaystyle\mumin^{\inf}(\ovl L(h))=\mumin^{\inf}(\ovl L)+\int_\Omega h\nu(d\omega).
            \end{cases}
        \]
        \item[\textnormal{(ii)}] If $L$ is big and $\mumin^{\inf}(\ovl L)\in\R$, then the sequence $\displaystyle\left\{\frac{\ardeg(\pi_*(n\ovl L))}{n^{d+1}/(d+1)!}\right\}$ converges to $\vol_\chi(\ovl L)$.
        \item[\textnormal{(iii)}] If $\mumin^{\inf}(\ovl L)>-\infty$, then $$\vol_\chi(n\ovl L)=n^{d+1}\vol_\chi(\ovl L).$$
    \end{enumerate}
\end{prop}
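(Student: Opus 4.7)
For (i), I would observe that twisting the metric family by $h$ multiplies each supremum norm $\lVert\cdot\rVert_{\phi_\omega}$ on $H^0(X,L)\otimes K_\omega$ by $\exp(-h(\omega))$, so $\pi_*(n\ovl L(h))$ agrees with $\pi_*(n\ovl L)(nh)$ in the notation of Proposition \ref{prop_deg_posdeg}(a). That proposition then gives
\[
\ardeg(\pi_*(n\ovl L(h))) = \ardeg(\pi_*(n\ovl L)) + n\cdot h^0(X,nL)\int_\Omega h\,\nu(d\omega).
\]
Dividing by $n^{d+1}/(d+1)!$, the additive term becomes $(d+1)\bigl(\int_\Omega h\,\nu(d\omega)\bigr)\cdot h^0(X,nL)/(n^d/d!)$, which converges to $(d+1)\mathrm{vol}(L)\int_\Omega h\,\nu(d\omega)$ since $h^0(X,nL)/(n^d/d!)\to\mathrm{vol}(L)$ as a genuine limit for reduced projective $X$ (trivially so when $\mathrm{vol}(L)=0$ by non-negativity, and by the standard Okounkov body construction otherwise). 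Taking $\limsup$ on both sides gives the first identity; the second follows by the same reasoning applied to the $\mumin$ line of Proposition \ref{prop_deg_posdeg}(a), dividing by $n$ and taking $\liminf$.

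For (ii), I would consider the trivially valued graded linear series $\ovl E_\bullet$ whose $n$th term is $(H^0(X,nL),\lVert\cdot\rVert_n)$, where $\lVert\cdot\rVert_n$ is the ultrametric norm associated with the Harder--Narasimhan filtration of $\pi_*(n\ovl L)$. By the remark preceding the statement, $\ovl E_\bullet$ is $\delta$-superadditive. Since $L$ is big we have $\mathrm{vol}(E_\bullet)=\mathrm{vol}(L)>0$, and $\mumin^{\inf}(\ovl L)>-\infty$ by hypothesis, so Proposition \ref{prop_Okounkov}(3) yields convergence of $\sum_i\widehat\mu_i(\pi_*(n\ovl L))/(n^{d+1}/(d+1)!)$ to $\int_{\Delta(E_\bullet)}G_{\ovl E_\bullet}\,dx$. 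Proposition \ref{prop_succ_slope} then provides
\[
0\le \ardeg(\pi_*(n\ovl L)) - \sum_i\widehat\mu_i(\pi_*(n\ovl L)) \le \tfrac{1}{2} r_n\ln r_n
\]
with $r_n=h^0(X,nL)=O(n^d)$, so the error divided by $n^{d+1}/(d+1)!$ is $O(\ln n/n)\to 0$. Hence $\ardeg(\pi_*(n\ovl L))/(n^{d+1}/(d+1)!)$ converges to the same limit, which by definition equals $\vol_\chi(\ovl L)$.

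For (iii), I would set $b_k:=\ardeg(\pi_*(k\ovl L))/(k^{d+1}/(d+1)!)$; the change of variables $k=mn$ yields $\vol_\chi(n\ovl L)=n^{d+1}\limsup_m b_{mn}$. When $L$ is big, part (ii) promotes $\limsup b_k$ to a genuine limit equal to $\vol_\chi(\ovl L)$, so the subsequence along multiples of $n$ converges to the same value and the identity follows. When $L$ is not big, $\mathrm{vol}(L)=0$ forces $r_n=o(n^d)$; combining $\mumin^{\inf}(\ovl L)>-\infty$ with $\mumax^{\mathrm{asy}}(\ovl L)<+\infty$ (noted in the remark preceding the statement) gives $\mumin(\pi_*(n\ovl L))$ and $\mumax(\pi_*(n\ovl L))$ both of order $O(n)$, so $\lvert\sum_i\widehat\mu_i(\pi_*(n\ovl L))\rvert\le r_n\cdot O(n)=o(n^{d+1})$ and Proposition \ref{prop_succ_slope} propagates the same bound to $\ardeg$. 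Thus $b_k\to 0$ along the full sequence and both $\vol_\chi(\ovl L)$ and $\vol_\chi(n\ovl L)$ vanish.

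The main obstacle is part (ii): the crux is recognising the Harder--Narasimhan norms $\lVert\cdot\rVert_n$ on the sections as forming a $\delta$-superadditive graded linear series over the trivially valued field, which is what reduces the adelic-curve $\chi$-volume to the integration of a concave transform on an Okounkov body. Once that structural input is granted via the remark, the comparison between $\ardeg$ and $\sum_i\widehat\mu_i$ and the non-big case of (iii) are routine error-term estimates.
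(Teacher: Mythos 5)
Your proof is correct and follows essentially the same route as the paper: (i) is the direct application of Proposition \ref{prop_deg_posdeg}(a) to $\pi_*(n\ovl L)(nh)$, (ii) is exactly the concave-transform convergence of Proposition \ref{prop_Okounkov}(3) for the Harder--Narasimhan ultrametric linear series combined with the error bound of Proposition \ref{prop_succ_slope} (which is the content of the external remark the paper cites), and (iii) splits into the big case via (ii) and the non-big case. The only difference is that you fill in details the paper leaves implicit, notably the $o(n^{d+1})$ estimate from $\mumin^{\inf}(\ovl L)>-\infty$ and $\mumax^{\mathrm{asy}}(\ovl L)<+\infty$ showing $\vol_\chi(\ovl L)=\vol_\chi(n\ovl L)=0$ when $L$ is not big, which the paper simply asserts.
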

\begin{proof}
(i) This is a direct result from definitions and Proposition \ref{prop_deg_posdeg}.
(ii) This is due to Remark \cite[6.3.27]{adelic}.
(iii)
    If $L$ is not big, then $\vol_\chi(\ovl L)=\vol_\chi(n\ovl L)=0$.
    If $L$ is big, we conclude the proof by using (ii).
\end{proof}
\subsection{An arithmetic Hilbert-Samuel formula} In this subsection, we extend the result in \cite[Theorem 5.5.1]{huayimoriwaki_equi} to a semiample and semipositive adelic line bundle.
\begin{lemm}\label{lemm_pos_notbig}
    Let $\ovl L$ be an adelic line bundle such that $L$ is not big.
    Then $\vol(\ovl L)=0$. Moreover we have the following
    \begin{enumerate}
        \item[\textnormal{(a)}] If $\mumin^{\inf}(\ovl L)\in\R$, then $\vol_\chi(\ovl L)=\vol(\ovl L)=0$.
        \item[\textnormal{(b)}] If $\ovl L$ is nef and semipositive, then $\widehat c_1(\ovl L)^{d+1}\leq 0$.
    \end{enumerate}
\end{lemm}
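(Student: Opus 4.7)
My proof plan is driven by the slope estimates of Proposition \ref{prop_succ_slope} applied to the pushforwards $\pi_*(n\ovl L)$; set $r_n := h^0(X,nL)$. For the main assertion $\vol(\ovl L)=0$, the upper bound in Proposition \ref{prop_succ_slope} yields
$$\ardeg_+(\pi_*(n\ovl L))\le r_n\max(\mumax(\pi_*(n\ovl L)),\,0)+\tfrac{\nu(\Omega_\infty)}{2}\,r_n\ln r_n.$$
The Remark following the definition of $\mumax^{\mathrm{asy}}$ above (invoking \cite[Proposition~6.4.4]{adelic}) gives $\mumax^{\mathrm{asy}}(\ovl L)<+\infty$, so $\mumax(\pi_*(n\ovl L))=O(n)$; meanwhile the non-bigness of $L$ forces $r_n=o(n^d)$. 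Dividing by $n^{d+1}/(d+1)!$ and passing to $\limsup$, both terms vanish, giving $\vol(\ovl L)=0$.

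For (a), the chain $\vol_\chi(\ovl L)\le\vol(\ovl L)=0$ is automatic from $\ardeg\le\ardeg_+$. For the matching lower bound, the first inequality of Proposition \ref{prop_succ_slope} yields $\ardeg(\pi_*(n\ovl L))\ge\sum_i\widehat\mu_i(\pi_*(n\ovl L))\ge r_n\,\mumin(\pi_*(n\ovl L))$. Under the hypothesis $\mumin^{\inf}(\ovl L)\in\R$ one has $\mumin(\pi_*(n\ovl L))\ge -Cn$ for some $C>0$ and all $n\gg 0$, so $\ardeg(\pi_*(n\ovl L))\ge -Cn\,r_n=o(n^{d+1})$, forcing $\vol_\chi(\ovl L)=0$.

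For (b) I turn to a perturbation: choose an ample and semipositive adelic line bundle $\ovl A$ (which exists thanks to the Fubini--Study construction recalled at the start of this section) and, for rational $\epsilon>0$, consider the ample semipositive $\mathbb Q$-adelic line bundle $\ovl L+\epsilon\ovl A$. The Hilbert--Samuel identity in that case, an analogue of \cite[Theorem~5.5.1]{huayimoriwaki_equi} in the adelic-curve framework, yields $\vol_\chi(\ovl L+\epsilon\ovl A)=\widehat c_1(\ovl L+\epsilon\ovl A)^{d+1}$. Combining this with $\vol_\chi\le\vol$ and the estimate $\vol(\ovl M)\le(d+1)\mumax^{\mathrm{asy}}(\ovl M)\,\mathrm{vol}(M)$ obtained in the proof of the main assertion (applied to $\ovl M=\ovl L+\epsilon\ovl A$), one obtains
$$\widehat c_1(\ovl L+\epsilon\ovl A)^{d+1}\le(d+1)\,\mumax^{\mathrm{asy}}(\ovl L+\epsilon\ovl A)\cdot\mathrm{vol}(L+\epsilon A).$$
Since $\mathrm{vol}(L+\epsilon A)\to\mathrm{vol}(L)=0$ by continuity of the geometric volume, while $\mumax^{\mathrm{asy}}(\ovl L+\epsilon\ovl A)$ stays bounded as $\epsilon\to 0^+$, the right-hand side tends to $0$; the left-hand side is a polynomial in $\epsilon$ that converges to $\widehat c_1(\ovl L)^{d+1}$. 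Hence $\widehat c_1(\ovl L)^{d+1}\le 0$.

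The delicate point is (b), where I invoke the Hilbert--Samuel identity in the ample semipositive case as a black box and must ensure $\mumax^{\mathrm{asy}}(\ovl L+\epsilon\ovl A)$ is uniformly bounded as $\epsilon\to 0^+$. The former is available from existing literature; the latter follows from a subadditivity property of $\mumax^{\mathrm{asy}}$ on $\widehat{\mathrm{Pic}}(X)$, itself a consequence of the tensor-product slope bounds recalled at the beginning of Section \ref{sec_adelic_lin}. The remaining ingredients---Proposition \ref{prop_succ_slope}, continuity of the classical volume, and multilinearity of the arithmetic intersection pairing---are routine.
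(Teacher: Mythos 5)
Your proof of the main assertion and of part (a) is correct and close in spirit to the paper. The estimate $\ardeg_+(\pi_*(n\ovl L))\le r_n\max(\mumax(\pi_*(n\ovl L)),0)+O(r_n\ln r_n)$ combined with $\mumax^{\mathrm{asy}}(\ovl L)<+\infty$ and $r_n=o(n^d)$ is exactly the mechanism behind the paper's one-line bound $\vol(\ovl L)\le (d+1)\mumax^{\mathrm{asy}}(\ovl L)\mathrm{vol}(L)$. For (a) the paper instead twists by an integrable function $h$ so that the minimal slopes become nonnegative and then applies Proposition \ref{prop_deg_posdeg}(b) and Proposition \ref{prop_vol_chi}(i); your direct lower bound $\ardeg(\pi_*(n\ovl L))\ge r_n\,\mumin(\pi_*(n\ovl L))\ge -Cnr_n=o(n^{d+1})$ achieves the same thing and is fine.

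The genuine gap is in (b), which the paper itself settles by citing \cite[Theorem 6.5.1]{huayimoriwaki_equi}; your perturbation strategy is a legitimate alternative in outline, but the step you yourself flag as delicate is not justified by what you invoke. You need $\mumax^{\mathrm{asy}}(\ovl L+\epsilon\ovl A)$ bounded uniformly as $\epsilon\to 0^+$ and claim this follows from a ``subadditivity'' of $\mumax^{\mathrm{asy}}$ deduced from the tensor-product slope bounds. Those bounds go the wrong way: applied to the (non-surjective) multiplication maps $H^0(X,nL_1)\otimes H^0(X,nL_2)\to H^0(X,n(L_1+L_2))$ they control slopes of the image only, hence give \emph{super}additivity of $\mumax^{\mathrm{asy}}$ (equivalently, inequality \eqref{ineq_concave_trans} together with Proposition \ref{prop_Okounkov}(2)), i.e.\ a lower bound for $\mumax^{\mathrm{asy}}(\ovl L_1+\ovl L_2)$, never an upper bound. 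So the uniform bound is unsupported as written. Two repairs are available: (i) bypass $\mumax^{\mathrm{asy}}$ altogether and use the continuity of $\vol(\cdot)$ (\cite[Theorem 6.4.24]{adelic}, which the paper invokes later), so that $\widehat c_1(\ovl L+\epsilon\ovl A)^{d+1}=\vol_\chi(\ovl L+\epsilon\ovl A)\le\vol(\ovl L+\epsilon\ovl A)\rightarrow\vol(\ovl L)=0$; or (ii) prove the uniform bound by fixing a nonzero $s\in H^0(X,A)$ and, for $\epsilon=p/q\in(0,1]$ and $q\mid n$, using multiplication by $s^{n(1-\epsilon)}$ to embed $H^0(X,n(L+\epsilon A))$ into $H^0(X,n(L+A))$ with operator norms controlled by powers of $\lVert s\rVert$, which yields $\mumax^{\mathrm{asy}}(\ovl L+\epsilon\ovl A)\le\mumax^{\mathrm{asy}}(\ovl L+\ovl A)+O(1)$ independently of $\epsilon$. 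With either patch your argument for (b) goes through and is genuinely more self-contained than the paper's citation, modulo the ample semipositive Hilbert--Samuel formula, which both you and the paper use as a black box.
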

\begin{proof}
    The volume $\vol(\ovl L)=0$ is trivial due to the fact that $$\vol(\ovl L)\leq \limsup \frac{\mumax^{\mathrm{asy}}(\pi_*(n\ovl L))h^0(X,nL)}{n^{d+1}/(d+1)!}\leq (d+1)\mumax^{\mathrm{asy}}(\ovl L)\mathrm{vol}(L).$$
    
(a) Since the asymoptotic minimal slope is bounded, we can choose a $\nu$-integrable function $h$ such that $\vol_\chi(\ovl L(h))=\vol(\ovl L(h))=0.$ Then we obtain the statement by using Proposition \ref{prop_vol_chi} (i).

(b) This is a direct result of \cite[Theorem 6.5.1]{huayimoriwaki_equi}.
\end{proof}
\begin{lemm}\label{lemm_vol_ineq}
    Let $\ovl L=(L,\phi)$ and $\ovl M=(M,\psi)$ be adelic line bundles satisfying the following conditions:
    \begin{itemize}
        \item[\textnormal{(1)}] $\mumin^{\inf}(\ovl M)\in\R$.
        \item[\textnormal{(2)}] There exists a non-zero section $s\in H^0(X,L-M)$ such that $\ardeg_{\xi_{\phi-\psi}}(s)\geq 0$.
    \end{itemize}
    Then  $$\vol_\chi(\ovl L)\leq \vol_\chi(\ovl M)-(d+1)\mumin^{\inf}(\ovl M)(\mathrm{vol}(L)-\mathrm{vol}(M)).$$
\end{lemm}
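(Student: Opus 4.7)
The strategy is to apply the upper-bound inequality in Proposition \ref{prop_exact} to the short exact sequence
$$0\to F_n\to\pi_*(n\ovl L)\to Q_n\to 0,\qquad F_n:=s^n\cdot H^0(X,nM),$$
where $F_n$ carries the norm restricted from $\pi_*(n\ovl L)$ and $Q_n$ the quotient norm. This yields
$$\ardeg(\pi_*(n\ovl L))\le\ardeg(F_n)+\ardeg(Q_n)+\tfrac{\nu(\Omega_\infty)}{2}h^0(X,nL)\ln h^0(X,nL),$$
so after dividing by $n^{d+1}/(d+1)!$ and taking $\limsup$, it suffices to produce upper bounds on $\ardeg(F_n)$ and $\ardeg(Q_n)$ that sum to the claimed right-hand side.

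The key input is the sup-norm sub-multiplicativity $\|s^nu\|_{n\phi,\omega}\le\|s\|_{\phi-\psi,\omega}^n\|u\|_{n\psi,\omega}$ at each place, which at the Harder--Narasimhan level produces the slope shift $\mumin(F_n)\ge\mumin(\pi_*(n\ovl M))+n\ardeg_{\xi_{\phi-\psi}}(s)$, placing $F_n\subseteq\mathcal F_{hn}^{\mumin(\pi_*(n\ovl M))+n\ardeg_{\xi_{\phi-\psi}}(s)}(\pi_*(n\ovl L))$ of full dimension $h^0(X,nM)$. Passing through the asymptotic norm $\|\cdot\|'_n$ of Proposition \ref{prop_asy_norm}, where the sub-multiplicative inequality becomes tight up to the $\delta$-superadditive polynomial-in-dimension factors (which are $o(n^{d+1})$ after normalisation), one obtains
$$\limsup_n\frac{\ardeg(F_n)}{n^{d+1}/(d+1)!}\le\vol_\chi(\ovl M)+(d+1)\vol(M)\ardeg_{\xi_{\phi-\psi}}(s).$$

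For $\ardeg(Q_n)$: the saturation of the top HN-piece by $F_n$ at threshold $t_n:=\mumin(\pi_*(n\ovl M))+n\ardeg_{\xi_{\phi-\psi}}(s)$ forces the complementary $\dim Q_n=h^0(X,nL)-h^0(X,nM)\sim(\vol(L)-\vol(M))n^d/d!$ Harder--Narasimhan slopes of $\pi_*(n\ovl L)$ downward. Combining this with Proposition \ref{prop_succ_slope} (to pass between $\ardeg(Q_n)$ and the sum of successive slopes) and the asymptotic identity just established for $\ardeg(F_n)$ (which pins down the contribution of the top $h^0(X,nM)$ slopes and thereby constrains the rest via the overall upper bound on $\ardeg(\pi_*(n\ovl L))$ from Proposition \ref{prop_exact}), one extracts
$$\limsup_n\frac{\ardeg(Q_n)}{n^{d+1}/(d+1)!}\le-(d+1)\mumin^{\inf}(\ovl M)(\vol(L)-\vol(M))-(d+1)\vol(M)\ardeg_{\xi_{\phi-\psi}}(s).$$
Adding the two limsup bounds, the $\ardeg_{\xi_{\phi-\psi}}(s)$-terms cancel and we recover $\vol_\chi(\ovl L)\le\vol_\chi(\ovl M)-(d+1)\mumin^{\inf}(\ovl M)(\vol(L)-\vol(M))$, as desired.

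\emph{Main obstacle.} The principal technical difficulty is twofold: (a) extracting the \emph{upper} bound on $\ardeg(F_n)$ from the one-sided sup-norm inequality, which by itself only produces the reverse bound $\ardeg(F_n)\ge\ardeg(\pi_*(n\ovl M))+n\,h^0(X,nM)\ardeg_{\xi_{\phi-\psi}}(s)$ --- this is resolved by the tightness of $\|\cdot\|'_n$ in the $\limsup$ limit; and (b) bounding the complementary HN slopes of $\pi_*(n\ovl L)$ by $t_n$ rather than by the loose estimate $n\mumax^{\mathrm{asy}}(\ovl L)$, which requires using the asymptotic identity for $\ardeg(F_n)$ together with the overall upper bound on $\ardeg(\pi_*(n\ovl L))$ to force the balance. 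Step (b) is where the coefficient $\mumin^{\inf}(\ovl M)$ --- rather than $\mumax^{\mathrm{asy}}(\ovl L)$ --- enters the estimate, and it is the heart of the argument.
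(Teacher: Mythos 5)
Your decomposition runs in the opposite direction from the one that works, and this is not cosmetic: it forces you to prove upper bounds that are not available. The paper's proof (and its use in Theorem \ref{theo_semi_hil_sam}, where the auxiliary section lies in $H^0(X,A)=H^0(X,M-L)$) shows that the multiplication map is meant to go $H^0(X,mL)\xrightarrow{\cdot s^m}H^0(X,mM)$, realizing $\pi_*(m\ovl L)$, up to a norm distortion controlled by the operator norms $\lVert s\rVert_\omega^m$, as a subbundle $\ovl I(m)$ of $\pi_*(m\ovl M)$; the hypothesis ``$s\in H^0(X,L-M)$'' is a sign typo. With that orientation every estimate is one-sided in the favourable direction: $\ardeg(\pi_*(m\ovl L))\leq\ardeg(\ovl I(m))$ since the operator norms are at most $\lVert s\rVert_\omega^m$ and $\ardeg_{\xi}(s)\geq 0$; then $\ardeg(\ovl I(m))\leq\ardeg(\pi_*(m\ovl M))-\ardeg(\ovl Q(m))$ by the \emph{lower} inequality of Proposition \ref{prop_exact}, with no error term; and finally $\ardeg(\ovl Q(m))\geq\mumin(\pi_*(m\ovl M))\dim_K Q(m)$ because any nonzero quotient has slope at least $\mumin$. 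The coefficient $\mumin^{\inf}(\ovl M)$ appears precisely because $Q(m)$ is a quotient of $\pi_*(m\ovl M)$, not of $\pi_*(m\ovl L)$.

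In your setup $F_n=s^n H^0(X,nM)$ sits inside $\pi_*(n\ovl L)$, so you need \emph{upper} bounds on both $\ardeg(F_n)$ and $\ardeg(Q_n)$, and neither is justified. (a) The sup-norm submultiplicativity only yields $\ardeg(F_n)\geq\ardeg(\pi_*(n\ovl M))+n\,h^0(X,nM)\,\ardeg_{\xi}(s)$, as you concede; the appeal to ``tightness of $\lVert\cdot\rVert'_n$'' does not repair this, since Proposition \ref{prop_asy_norm}(4) controls $\lVert\cdot\rVert'_n$ only from above by $\lVert\cdot\rVert_n$ times a polynomial factor, and the restricted norm $\sup_x\lvert s\rvert^n(x)\lvert u\rvert(x)$ can be far smaller than $\lVert s\rVert^n\lVert u\rVert$ when $s$ is small where $\lvert u\rvert$ peaks, so no upper bound of the required shape holds in general. (b) The inclusion $F_n\subseteq\mathcal F_{hn}^{t_n}(\pi_*(n\ovl L))$ does not ``force the complementary slopes downward'': the Harder--Narasimhan slopes of $\pi_*(n\ovl L)/F_n$ are bounded \emph{below} by $\mumin(\pi_*(n\ovl L))$, not above by $t_n$, and your proposed fix --- combining the unproven bound on $\ardeg(F_n)$ with ``the overall upper bound on $\ardeg(\pi_*(n\ovl L))$'' --- is circular, since that overall upper bound is exactly the statement being proved. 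The argument should be redone with the multiplication map oriented as in the paper.
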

\begin{proof}
    For any $m\in \N_+$, consider the injective morphism
    $$H^0(X,mL)\xrightarrow{\cdot s^{m}}H^0(X,mm).$$
    We denote by $I(m)$ the image of above morphism, and by $Q(m)$ the cokernel. We denote by $\ovl I(m)$ the sub-bundle of $\pi_*(m\ovl M)$, and by $\ovl Q(m)$ the quotient bundle. Since for each $\omega\in\Omega$, the morphism $H^0(X,mL)\ot {K_\omega}\rightarrow I(m)\ot {K_\omega}$ is of operator norm$\leq \lVert s^m\rVert_{m(\phi-\psi)_\omega}=\lVert s\rVert_{(\phi-\psi)_\omega}^m,$ we see that $$\ardeg(\pi_*(m\ovl L))\leq \ardeg(\ovl I(m))-m\ardeg_{\xi_{\phi-\psi}}(s)h^0(X,mL)\leq \ardeg(\ovl I(m)).$$
    Since \begin{align*}
        \ardeg(\ovl I(m))&\leq \ardeg(\pi_*(m\ovl M))-\ardeg(\ovl Q(m))\\
        &\leq \ardeg(\pi_*(m\ovl M))-\mumin(\pi_*(m\ovl M))\dim_K Q(m),
    \end{align*}
    by definition we have
    $$\vol_\chi(\ovl L)\leq \vol_\chi(\ovl M)-(d+1)\mumin^{\inf}(\ovl M)(\mathrm{vol}(L)-\mathrm{vol}(M)).$$
\end{proof}
\begin{theo}\label{theo_semi_hil_sam}
    Let $\ovl L$ be a semiample and semipositive adelic line bundle. 
    It holds that $$\vol_\chi(\ovl L)=\widehat c_1(\ovl L)^{d+1}$$
\end{theo}
\begin{proof}
    {\bf Step 1.} We first show that $\vol_\chi(\ovl L)\leq \widehat c_1(\ovl L)^d$.
    Let $\ovl A=(A,\psi)$ be an ample and semipositive adelic line bundle. We may assume that there exists a non-zero section $s\in H^0(X,A)$ such that $\ardeg_{\xi_{\psi}}(s)\geq 0$. Indeed, we can assume that $H^0(X,A)\not=0$, and take arbitrary non-zero $s\in H^0(X,A)$. If $\ardeg_{\xi_{\psi}}(s)<0$, we take a non-negative $\nu$-integrable function $h$ such that $$\ardeg_{\xi_{\psi+h}}(s)=\ardeg_{\xi_{\psi}}(s)+\int_{\Omega}h\nu(\omega)\geq 0,$$ then we can replace $\psi$ by $\psi+h$.
    
    For any $n\in\N_+$, by Lemma \ref{lemm_vol_ineq},
    \begin{align*}
        \vol_\chi(n\ovl L)\leq \vol_\chi(n\ovl L+\ovl A)-(d+1)\mumin^{\inf}(n\ovl L+\ovl A)(\mathrm{vol}(nL+A)-\mathrm{vol}(nL))\\
        = \widehat c_1(n\ovl L+\ovl A)^{d+1}-(d+1)\mumin^{\inf}(n\ovl L+\ovl A)(\mathrm{vol}(nL+A)-\mathrm{vol}(nL))
    \end{align*}
    where the equality is due to the Hilbert-Samuel for ample and semipositive adelic line bundles \cite[Theorem 5.5.1]{adelic}.
Then by the homogeneity of $\vol_\chi(\cdot)$, we obtain that
\begin{align*}
    \vol_\chi(&\ovl L)=\frac{1}{n^{d+1}}\vol_\chi(n\ovl L)\\&\leq \frac{1}{n^{d+1}}\widehat c_1(n\ovl L+\ovl A)^{d+1}-(d+1)\frac{\mumin^{\inf}(n\ovl L+\ovl A)(\mathrm{vol}(nL+A)-\mathrm{vol}(nL))}{n^{d+1}}.
\end{align*} Notice that $\mumin^{\inf}(n\ovl L+\ovl A)\geq O(n)$ and $\displaystyle\lim\limits_{n\rightarrow +\infty}\frac{\mathrm{vol}(nL+A)-\mathrm{vol}(nL)}{n^{d}}=0$. We obtain the inequality by letting $n\rightarrow+\infty$.

{\bf Step 2.} We show the reverse inequality $\vol_\chi(\ovl L)\geq \widehat c_1(\ovl L)^{d+1}$. We first assume that $L$ is not big, in which case we have $0=\vol_\chi(\ovl L)\leq \widehat c_1(\ovl L)^{d+1}\leq 0$ due to Lemma \ref{lemm_pos_notbig} and Step 1. Now we assume that $L$ is big. Then for $r\gg0$, there exists a non-zero section $t\in H^0(X,rL-A)$. We can take a $\nu$-integrable function $g$ such that $\ardeg_{\xi_{r\phi-\psi+g}}(t)\geq 0.$ Let $\displaystyle C_g:=\int_\Omega h\nu(d\omega).$

By the similar argument as in Step 1, we can obtain that for any fixed $n\in\N_+$, \begin{align*}\allowdisplaybreaks
    &\widehat c_1(n\ovl L+\ovl A)^{d+1}=\vol_\chi(n\ovl L+\ovl A)\\&\leq \vol_\chi((n+r)\ovl L+(\O_X,g))\\&\kern5em -(d+1)(\mumin^{\inf}((n+r)\ovl L)+C_g)(\mathrm{vol}((n+r)L)-\mathrm{vol}(nL+A))\\&=(n+r)^{d+1}\vol_\chi(\ovl L)+(d+1)(n+r)^d\mathrm{vol}(L)C_g\\
    &\kern5em -(d+1)(\mumin^{\inf}((n+r)\ovl L)+C_g)(\mathrm{vol}((n+r)L)-\mathrm{vol}(nL+A)).
\end{align*}
By taking quotient over $(n+r)^{d+1}$ of above and letting $n\rightarrow +\infty$, we obtain the reverse inequality.
\end{proof}
\begin{rema}
    We can see that the same proof works if the following conditions are satisfied:
    \begin{enumerate}
        \item $\ovl L$ is nef and semipositive, and $\mumin^{\inf}(\ovl L)\in\R$,
        \item $\mumin^{\inf}(n\ovl L+\ovl A)\geq O(n)$ for some ample and semipositive adelic line bundle $\ovl A$.
    \end{enumerate}
\end{rema}
\subsection{Birational invariance}
\begin{comment}
    \begin{defi}
    Let $f:Y\rightarrow X$ be a proper morphism of $K$-varieties, and $\phi$ be a continuous metric on $L$. The pull-back
\end{defi}
\end{comment}

\begin{defi}
Let $f:X'\rightarrow X$ be a morphism of geometrically reduced projective $K$-varieties. Let $\ovl L:=(L,\phi)$ be an adelic line bundle over $X$. We define the \textit{pull-back} $f^*\ovl L$ of $\ovl L$ as $(L,\{f^*\phi_\omega\}_{\omega\in\Omega}).$ We refer the reader to \cite[Definition 2.2.4]{huayimoriwaki_equi} for the definition of the pull-back of a metric.
\end{defi}

In the following, we give a proof that $\vol_\chi(\cdot)$ is a birational invariant without using Stein-factorization.
\begin{theo}\label{theo_bir_inv}
Let $f:X'\rightarrow X$ be a birational morphism of geometrically reduced projective $K$-varieties. Let $\ovl L$ be an adelic line bundles such that $\mumin^{\inf}(f^*\ovl L)>-\infty$. Then 
$$\vol_\chi(f^*\ovl L)=\vol_\chi(\ovl L).$$
\end{theo}
\begin{proof}
    For any $\omega\in \Omega$, we denote by $f_\omega$ the base change $f\times (\mathrm{Spec}K_\omega\rightarrow \mathrm{Spec}K)$. Let $s\in H^0(X,mL)\ot K_\omega$, for any $y\in (X')_\omega^{\mathrm{an}}$ and $x=f_\omega^{\mathrm{an}}(y)$, by definition we have $\lvert f^*(s)\rvert_{f^*(m\phi_\omega)}(y)=\lvert s\rvert_{m\phi_\omega}(x)\ot_{\epsilon,\pi}\widehat \kappa(y)=\lvert s\rvert_{m\phi_\omega}(x)$, where the second equality is due to \cite[Proposition 1.3.1]{adelic}.
    
    Therefore we can see that $\xi_{m\phi}$ coincides with the norm family induced by $\xi_{f^*(m\phi)}$ and $H^0(X,mL)\rightarrow H^0(X',f^*(mL))$. Let $$Q(m,L):=H^0(X',f^*(mL))/H^0(X,mL).$$ We denote by $Q(m\ovl L)$ the adelic vector bundle $(Q(mL),\xi_{f^*(m\phi),\quot})$ which is the quotient of $\pi'_*(f^*(m\ovl L))$ over $\pi_*(m\ovl L)$. Therefore by Proposition \ref{prop_exact}, we have 
    \begin{align*}
        0\leq \ardeg(&\pi'_*(f^*(m\ovl L)))-(\ardeg(\pi_*(m\ovl L))+\ardeg(Q(m\ovl L)))\\
        &\leq \frac{\nu(\Omega_\infty)}{2} h^0(X',f^*(mL))\ln(h^0(X',f^*(mL)).
    \end{align*}
    So it suffices to prove that $\ardeg(Q(m\ovl L))=o(m^{d+1}).$
    
    {\bf Step 1.} We first assume that $\ovl L$ is ample and semipositive, then $f^*(m\ovl L)$ is semiample and semipositive.
    By using Theorem \ref{theo_semi_hil_sam}, we obtain that $\ardeg(Q(m\ovl L))=o(m^{d+1}).$

    {\bf Step 2.} Now we consider the general case. Let $\ovl A=(A,\psi)$ be an adelic ample and semipositive line bundle such that there exists a non-zero section $s\in H^0(A-L)$ not vanishing at each associated point of $f_*(\O_{X'}/\O_X)$.
    Consider the commutative diagram:
    $$\begin{tikzcd}
    H^0(X',f^*(mL)) \arrow[r, "f^*(s^m)"] \arrow[d]
    & H^0(X',f^*(mA)) \arrow[d,  ] \\
    Q(m L) \arrow[r, "r_m"]
    & Q(m A).
    \end{tikzcd}$$
    For each $\omega\in\Omega$, we can see that the injective morphism $(r_m)_{K_\omega}$ is of operator norm $\leq \lVert f^*(s^m)\rVert_{f^*(m(\psi-\phi))_\omega}=\lVert s^m\rVert_{m(\psi-\phi)_\omega}=\lVert s\rVert_{(\psi-\phi)_\omega}^m.$ Therefore
    \begin{align*}
        \mumin(Q(m\ovl L))\dim_K Q(mL)&\leq \\ \ardeg(Q(m\ovl L))\leq -m&\ardeg_{\xi_{\psi-\phi}}(s)(\dim_K(Q(mL)))+\ardeg(Q(m\ovl A))\\&-\mumin(Q(m\ovl A))\left(\dim_K(Q(mA))-\dim_K(Q(mL))\right)
    \end{align*}
    Notice that \[\begin{cases}\displaystyle
        \liminf\limits_{m\rightarrow+\infty}\frac{\mumin(Q(m\ovl A))}{m}\geq \liminf\limits_{m\rightarrow+\infty}\frac{\mumin(\pi'_*(f^*(m\ovl A)))}{m}=\mumin^{\mathrm{asy}}(f^*(\ovl A))> -\infty,\\
        \displaystyle\liminf\limits_{m\rightarrow+\infty}\frac{\mumin(Q(m\ovl L))}{m}\geq \liminf\limits_{m\rightarrow+\infty}\frac{\mumin(\pi'_*(f^*(m\ovl L)))}{m}=\mumin^{\inf}(f^*(\ovl L))> -\infty,\\
        \displaystyle\dim_K Q(mA),\dim_K Q(mL)\sim o(m^d).
    \end{cases}\]
    We obtain that $Q(m\ovl L)\sim o(m^{d+1})$ which implies that $\vol_\chi(f^*\ovl L)=\vol_\chi(\ovl L).$
\end{proof}
\subsection{Concavity of $\chi$-volume}
Let $\ovl L$ be a big adelic line bundle. Let $\Delta(L)$ be the Okoukov body associated to the linear series $\{H^0(X,nL)\}_{n\in\N}$ defined as in \ref{subsect_Okounkov}. Let $G_{\ovl L}:\Delta(L)\rightarrow \R$ be the concave transform given by the ultrametric normed linear series associated to the Harder-Narasimhan filtrations of $\pi_*(n\ovl L)$. Then we can see that if $\mumin^{\inf}(\ovl L)\geq -\infty$, then $$\vol_\chi(\ovl L)=\int_{\Delta(L)}G_{\ovl L}(x)dx$$
due to Proposition \ref{prop_Okounkov}. Let $\ovl L_1$ and $\ovl L_2$ be big adelic line bundles. Then $\Delta(L_1)+\Delta(L_2)\subset \Delta(L_1+L_2)$ and \begin{equation}\label{ineq_concave_trans}
    G_{\ovl L_1+\ovl L_2}(x+y)\geq G_{\ovl L_1}(x)+G_{\ovl L_2}(y)
\end{equation} for any $x\in \Delta(L_1)$ and $y\in\Delta(L_2)$ due to Proposition \ref{prop_Okounkov}(5). Therefore we have the following concavity theorem:
\begin{prop}\label{prop_concav}
    let $\ovl L_1$ and $\ovl L_2$ be big adelic line bundles such that 
    \begin{enumerate}
        \item[\textnormal{(1)}] $\Delta(L_1)+\Delta(L_2)=\Delta(L_1+L_2)$,
        \item[\textnormal{(2)}] $\mumin^{\inf}(\ovl L_1),\mumin^{\inf}(\ovl L_2),\mumin^{\inf}(\ovl L_1+\ovl L_2)\in\R$
    \end{enumerate}
    Then 
    $$\frac{\vol_\chi(\ovl L_1+\ovl L_2)}{\mathrm{vol}(L_1+L_2)}\geq\frac{\vol_\chi(\ovl L_1)}{\mathrm{vol}(L_1)}+\frac{\vol_\chi(\ovl L_2)}{\mathrm{vol}(L_2)}$$
\end{prop}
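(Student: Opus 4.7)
The plan is to reduce the stated inequality to an averaged Brunn--Minkowski--type inequality for the concave transforms on the Okounkov bodies, making essential use of hypothesis~(1) and of the sup-convolution estimate~(\ref{ineq_concave_trans}).

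First, since $L_1, L_2, L_1+L_2$ are all big and $\mumin^{\inf}$ is finite on each, Proposition~\ref{prop_Okounkov}(3) yields $\vol_\chi(\ovl L_i)=\int_{\Delta(L_i)}G_{\ovl L_i}$ and likewise for $\ovl L_1+\ovl L_2$. Using the Okounkov-body identity $\mathrm{vol}(L)=d!\,\mathrm{vol}(\Delta(L))$, the claim becomes the averaged form
\[
\frac{1}{\mathrm{vol}(\Delta)}\int_{\Delta}G_{\ovl L_1+\ovl L_2}\;\geq\;\frac{1}{\mathrm{vol}(\Delta_1)}\int_{\Delta_1}G_{\ovl L_1}+\frac{1}{\mathrm{vol}(\Delta_2)}\int_{\Delta_2}G_{\ovl L_2},
\]
where I write $\Delta_i=\Delta(L_i)$ and $\Delta=\Delta(L_1+L_2)$.

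Next, by hypothesis~(1) we have $\Delta=\Delta_1+\Delta_2$, and by~(\ref{ineq_concave_trans}) the function $G_{\ovl L_1+\ovl L_2}$ dominates on $\Delta$ the sup-convolution $\tilde G(z):=\sup_{x+y=z}(G_{\ovl L_1}(x)+G_{\ovl L_2}(y))$, so it suffices to prove the averaged inequality with $G_{\ovl L_1+\ovl L_2}$ replaced by $\tilde G$. Using Proposition~\ref{prop_vol_chi}(i) one may shift the concave transforms by a common constant so that $G_{\ovl L_i}\geq 0$; both sides transform identically, so this reduction is harmless. Then the hypographs $\hat\Gamma_i=\{(x,t):x\in\Delta_i,\,0\leq t\leq G_{\ovl L_i}(x)\}\subset\R^{d+1}$ are convex with $\mathrm{vol}_{d+1}(\hat\Gamma_i)=\int_{\Delta_i}G_{\ovl L_i}$, and a direct verification shows $\hat\Gamma_1+\hat\Gamma_2=\{(z,u):z\in\Delta,\,0\leq u\leq\tilde G(z)\}$, so $\mathrm{vol}(\hat\Gamma_1+\hat\Gamma_2)=\int_\Delta\tilde G$.

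The main obstacle is the resulting convex-geometric inequality: the average height of $\hat\Gamma_1+\hat\Gamma_2$ over its base $\Delta$ should exceed the sum of the average heights of the $\hat\Gamma_i$ over $\Delta_i$. This is strictly stronger than the standard $(d+1)$-th root Brunn--Minkowski inequality for hypographs, which alone is not enough. The argument I have in mind is to compare $\hat\Gamma_1+\hat\Gamma_2$ with the Minkowski sum of the equivalent cylinders $C_i=\Delta_i\times[0,h_i]$, where $h_i=\mathrm{vol}(\hat\Gamma_i)/\mathrm{vol}(\Delta_i)$: these have the same base projections and total volumes as the $\hat\Gamma_i$, and $C_1+C_2=\Delta\times[0,h_1+h_2]$ has precisely the sought-for average height $h_1+h_2$. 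The required volume comparison $\mathrm{vol}(\hat\Gamma_1+\hat\Gamma_2)\geq\mathrm{vol}(C_1+C_2)$ then follows from the fibered (sup-convolution) structure of the hypograph sum, combined with the exact additivity of base projections given by~(1), via a Fubini-type integration over the height coordinate.
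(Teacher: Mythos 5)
Your reduction of the statement to the averaged inequality $\frac{1}{\mathrm{vol}(\Delta)}\int_\Delta\tilde G\geq h_1+h_2$ for the sup-convolution $\tilde G$ on $\Delta=\Delta_1+\Delta_2$ is correct, as is the identification of $\int_\Delta\tilde G$ with $\mathrm{vol}_{d+1}(\hat\Gamma_1+\hat\Gamma_2)$ and your observation that Brunn--Minkowski alone is too weak. The gap is the final volume comparison $\mathrm{vol}(\hat\Gamma_1+\hat\Gamma_2)\geq\mathrm{vol}(C_1+C_2)$: it is asserted rather than proved (``fibered structure \dots\ Fubini-type integration'' is not an argument), and it is in fact false. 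Take $d=2$, $\Delta_1=\{x_1,x_2\geq 0,\ x_1+x_2\leq 1\}$, $\Delta_2=[0,1]^2$, $G_1(x)=1-x_2$, $G_2(y)=1-y_2$ (affine, hence concave, and nonnegative). Then $h_1=2/3$, $h_2=1/2$, and $\tilde G(z)=2-z_2$ on $\Delta=\Delta_1+\Delta_2$; a direct computation gives $\mathrm{vol}(\Delta)=7/2$ and $\int_\Delta z_2\,dz=19/6$, hence $\mathrm{vol}(\hat\Gamma_1+\hat\Gamma_2)=\int_\Delta\tilde G=23/6=46/12$, while $\mathrm{vol}(C_1+C_2)=\mathrm{vol}(\Delta)\,(h_1+h_2)=\tfrac{7}{2}\cdot\tfrac{7}{6}=49/12$. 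So your key inequality fails, and with it the averaged inequality itself: $\frac{1}{\mathrm{vol}(\Delta)}\int_\Delta\tilde G=\tfrac{23}{21}<\tfrac{7}{6}=h_1+h_2$.

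The failure is structural rather than an artifact of your particular route. Testing the averaged inequality on $G_1=\ell|_{\Delta_1}$, $G_2=\ell|_{\Delta_2}$, $\tilde G=\ell|_{\Delta}$ for a linear form $\ell$ (for which \eqref{ineq_concave_trans} holds with equality) shows it would force $\ell(\mathrm{bary}(\Delta_1+\Delta_2))\geq\ell(\mathrm{bary}(\Delta_1))+\ell(\mathrm{bary}(\Delta_2))$ for every admissible $\ell$, hence (using $\pm\ell$) additivity of barycenters of the Okounkov bodies; this fails as soon as $\Delta_1$ and $\Delta_2$ are not homothetic (in the example above $\mathrm{avg}_{\Delta}(z_2)=\tfrac{19}{21}>\tfrac13+\tfrac12$). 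Consequently no proof using only \eqref{ineq_concave_trans}, concavity, and hypothesis (1) can succeed, which also puts the paper's own one-line justification in question; either an additional hypothesis is needed (e.g.\ that $\Delta_1$ and $\Delta_2$ are homothetic, as for curves, where the splitting $z\mapsto\bigl(\tfrac{a_1}{a_1+a_2}z,\tfrac{a_2}{a_1+a_2}z\bigr)$ pushes normalized Lebesgue measure on $\Delta$ to normalized Lebesgue measure on each $\Delta_i$ and makes your strategy work), or one must use information about $G_{\ovl L_1+\ovl L_2}$ beyond its lower bound by the sup-convolution.
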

\begin{proof}
    This is a direct result of \eqref{ineq_concave_trans}.
\end{proof}
\section{Continuous extension of $\chi$-volume}\label{sect_adl_R_div}
We keep the same assumption as in section \ref{sec_adelic_lin}.
We further assume that $X$ is geometrically integral and normal.
For a Cartier divisor $D$ on $X$, we define the \textit{$D_\omega$-Green function} $g_\omega$ to be an element of the set
$$C_{gen}^0(\xanomega):=\{f\text{ is a continuous function on }U\mid \emptyset\not= U\mathop{\subset}_{open} \xanomega\}/\sim$$
where $f\sim g\text{ if they are identical on some } V_{\omega}^{an}$ for some non-empty open subset $V\subset X$ such that for any local equation $f_D$ of $D$ on $U$, $\ln|f_D|+g_\omega$ is continuous on $U^{an}_\omega$. It's easy to see that each $D_\omega$-Green function induce a continuous metric on the corresponding line bundle (see \cite[Section 2.5]{adelic} for details). Moreover, we say a pair $(D,g=\{g_\omega\})$ of Cartier divisor $D$ and Green function family $g$ is an \textit{adelic Cartier divisor} if the it corresponds to an adelic line bundle. We denote by $\widehat{\Div}(X)$ the group of all adelic Cartier divisor. Let $\mathbb{K}=\Q\textit{ or }\R$, then we can define the set of adelic $\mathbb{K}$-Cartier divisors as
$$\widehat{\Div}_{\mathbb{K}}(X)=\widehat{\Div}(X)\otimes_\Z \mathbb{K}/\sim$$
where "$\sim$" is the equivalence relationship generated by
$\sum (0,g_i)\otimes k_i\sim (0,\sum g_i k_i)$, where $g_i$'s are continuous funciton families and $k_i\in \mathbb{K}$.

For any $\R$-Caritier divisor $D$, we can define the global section space as follows:
$$H_\R^0(D):=\{f\in K(X)^\times\mid \mathrm{div}(f)+D\geqslant_\R 0\}\cup\{0\}.$$
The conditions of Green function family being dominated and measurable will lead us to the following result:
\begin{theo}\label{theo_push_adelic_R_div}
For any $(D,g)\in \widehat{\Div}_\R(X)$ and $\omega\in\Omega$, we consider a norm $\lVert\cdot\rVert_{g_\omega}$ defined as
$$\lVert f\rVert_{g_\omega}:=\sup_{x\in X^{\mathrm{an}}_\omega}\{\exp(-g_\omega(x)f(x)\}$$
for $f\in H^0_\R(D)\otimes_{K}K_\omega$. Let $\xi_g$ denote the norm family $\{\lVert\cdot\rVert_{g_\omega}\}_{\omega\in\Omega}$. Then the pair $(H^0_\R(D),\xi_g)$ is an adelic vector bundle.
\end{theo}
\begin{proof}
See \cite[Theorem 6.2.18]{adelic}.
\end{proof}
Now we can extend the definitions of the asymptotic invariants including $\vol_\chi(\cdot),\vol(\cdot),\mumax^{\mathrm{asy}}(\cdot),\mumin^{\inf}(\cdot)$ and $\mumin^{\sup}(\cdot)$ to adelic $\R$-Cartier divisors. In particular, $\vol(\cdot)$ is continuous in the sense of \cite[Theorem 6.4.24]{adelic}.

\begin{prop}
    Let $\ovl D$ be an adelic $\R$-Cartier divisor such that $\mumin^{\sup}(\ovl D)\in\R$, then $\vol_\chi(\alpha\ovl D)=\alpha^{d+1}\vol_\chi(\ovl D)$ for $\alpha\in\Q_+$.
\end{prop}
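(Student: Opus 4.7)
The strategy is to reduce rational scaling to integer scaling via the identity $q \alpha \ovl D = p \ovl D$, where $\alpha = p/q$ with $p, q \in \N_+$. If one has the integer-homogeneity relation
$$\vol_\chi(n \ovl D') = n^{d+1} \vol_\chi(\ovl D')$$
for any adelic $\R$-divisor $\ovl D'$ with $\mumin^{\sup}(\ovl D') \in \R$ and any $n \in \N_+$, then applying it to $\ovl D$ with $n = p$ and to $\alpha \ovl D$ with $n = q$ gives
$$p^{d+1} \vol_\chi(\ovl D) = \vol_\chi(p \ovl D) = \vol_\chi(q \alpha \ovl D) = q^{d+1} \vol_\chi(\alpha \ovl D),$$
so $\vol_\chi(\alpha \ovl D) = \alpha^{d+1} \vol_\chi(\ovl D)$. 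To apply integer-homogeneity to $\alpha \ovl D$, I would first check that $\mumin^{\sup}(\alpha \ovl D) \in \R$: along the subsequence $n = qm$ of positive integers the scaling $n \alpha \ovl D = mp \ovl D$ is an integer rescaling of $\ovl D$, and the relevant limsup is bounded above by $\alpha \cdot \mumin^{\sup}(\ovl D) < +\infty$.

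The integer-homogeneity step is the analog, for adelic $\R$-Cartier divisors, of Proposition \ref{prop_vol_chi}(iii). Mirroring that proof, the inequality $\vol_\chi(n \ovl D) \le n^{d+1} \vol_\chi(\ovl D)$ is immediate, since the limsup over the subsequence $\{mn\}_m$ is bounded by the full limsup over $\N_+$. For the reverse, I would split into cases: if $D$ is not big, both sides vanish using the upper bound $\vol_\chi \le (d+1)\mumax^{\mathrm{asy}}(\ovl D)\mathrm{vol}(D) = 0$ combined with a $\nu$-integrable twist to control the lower bound (the finiteness of $\mumin^{\sup}$ is what makes a suitable twist possible); if $D$ is big, I would extend the arithmetic Okounkov body framework of Proposition \ref{prop_Okounkov} to the graded $\R$-linear series $\{H^0_\R(kD)\}_k$ equipped with the norms $\xi_{kg}$, obtain a concave transform $G_{\ovl D}$ on $\Delta(D)$, and deduce convergence of $\{\ardeg(\pi_*(k\ovl D))/(k^{d+1}/(d+1)!)\}_k$. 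Convergence forces the subsequence limsup to coincide with the full limsup, closing the gap.

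The main obstacle is that Proposition \ref{prop_Okounkov}(3) is stated under the hypothesis $\mumin^{\inf} > -\infty$, which is strictly stronger than our $\mumin^{\sup} \in \R$: if the liminf of $\mumin(\pi_*(k\ovl D))/k$ escapes to $-\infty$, the naive concave-transform convergence argument fails. The bridging step I would try first is a twist by a $\nu$-integrable function $h$ to reduce to the $\mumin^{\inf}$-finite regime where Proposition \ref{prop_Okounkov}(3) directly applies; alternatively, one may argue that the limsup of $\ardeg/k^{d+1}$ is controlled only by the upper portion of the Harder-Narasimhan filtrations, for which $\mumin^{\sup}$ is the natural asymptotic invariant. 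This is where I expect the delicate part of the proof to live.
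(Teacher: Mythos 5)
Your reduction from rational to integer $\alpha$ and the easy inequality $\vol_\chi(n\ovl D)\le n^{d+1}\vol_\chi(\ovl D)$ match the paper, and you correctly locate the difficulty: under the hypothesis $\mumin^{\sup}(\ovl D)\in\R$ one cannot invoke the convergence statement of Proposition~\ref{prop_vol_chi}(ii) (equivalently Proposition~\ref{prop_Okounkov}(3)), which requires $\mumin^{\inf}>-\infty$. The problem is that your proposed bridge --- twisting by a $\nu$-integrable $h$ to ``reduce to the $\mumin^{\inf}$-finite regime'' --- cannot work: by Proposition~\ref{prop_vol_chi}(i) such a twist shifts $\mumin^{\inf}$ by the \emph{finite} number $\int_\Omega h\,\nu(d\omega)$, so if $\mumin^{\inf}(\ovl D)=-\infty$ it remains $-\infty$ after any twist. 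No twist upgrades $\mumin^{\sup}\in\R$ to $\mumin^{\inf}>-\infty$, and without the latter the sequence $\ardeg(\pi_*(k\ovl D))/(k^{d+1}/(d+1)!)$ need not converge, so the step ``convergence forces the subsequence limsup to coincide with the full limsup'' is unavailable. Your main route therefore has a genuine gap in the big case. (A secondary issue: your verification that $\mumin^{\sup}(\alpha\ovl D)\in\R$ only bounds a subsequential limsup from above; it does not rule out $\mumin^{\sup}(\alpha\ovl D)=-\infty$.)

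The paper closes the gap in the direction of your second, undeveloped suggestion, and the missing idea is to route through $\vol$ rather than through convergence of the $\chi$-degrees. A twist \emph{can} arrange $\mumin^{\sup}(\ovl D+(0,h))=\mumin^{\sup}(\ovl D)+\int_\Omega h\,\nu(d\omega)>0$, precisely because $\mumin^{\sup}$ is finite by hypothesis. Then Proposition~\ref{prop_Okounkov}(4), which needs only $\mumin^{\sup}\ge 0$ and bigness, identifies $\vol_\chi$ with $\vol$ for the twisted divisor and for its positive integer multiples, and the homogeneity of the arithmetic volume $\vol$ --- which holds unconditionally, with no slope hypothesis, by Corollary~6.4.14 of \cite{adelic} --- transfers to $\vol_\chi$; untwisting via Proposition~\ref{prop_vol_chi}(i) finishes the integer case, and your rational reduction then applies verbatim. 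Your treatment of the non-big case (both sides vanish, using finiteness of $\mumax^{\mathrm{asy}}$ for the upper bound and the lower bound $\ardeg\ge\mumin\cdot\dim$ along a subsequence realizing $\mumin^{\sup}$) is essentially sound.
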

\begin{proof}
    {\bf Step 1.} We first prove that the equality holds for $\alpha\in\N_+$. Notice that $\mumin^{\sup}(\alpha\ovl D)\leq \alpha\mumin^{\sup}(\ovl D)$ by definition. Let $h$ be a $\nu$-integrable function such that $\displaystyle\int_\Omega h\nu(d\omega)>-\mumin^{\sup}(\ovl D)$. Then \begin{align*}
    \vol_\chi(\alpha (\ovl D+(0,h)))&=\vol
    (\alpha(\ovl D+(0,h)))\\&=\alpha^{d+1}\vol(\ovl D+(0,h))=\alpha^{d+1}\vol_\chi(\ovl D+(0,h))
    \end{align*}
    where the first and third equality are due to Proposition \ref{prop_Okounkov}(4), and the second equality is due to \cite[Corollary 6.4.14]{adelic}. Since $\displaystyle\vol_\chi(\alpha(\ovl D+(0,h)))=\vol_\chi(\alpha\ovl D)+(d+1)\mathrm{vol}(\alpha D)\int_\Omega \alpha h\nu(d\omega)=\vol_\chi(\alpha\ovl D)+(d+1)\alpha^{d+1}\mathrm{vol}(D)\int_\Omega h\nu(d\omega)$, we obtain that $\vol_\chi(\alpha \ovl D)=\alpha^{d+1}\vol_\chi(\ovl D).$

    {\bf Step 2.} Now assume that $\alpha=\beta/\gamma$ for $\beta,\gamma\in \N_+$. Then $\beta^{d+1}\vol_\chi(\ovl D)=\vol_\chi(\beta \ovl D)=\vol_\chi(\alpha \gamma\ovl D)=\gamma^{d+1}\vol_\chi(\alpha\ovl D),$ we are done.
\end{proof}
\begin{theo}
    If in addition $\dim X=1$ or $X$ is toric, then for adelic $\Q$-Cartier divisors $\ovl D_1,\cdots, \ovl D_r$ with $D_1,\dots, D_r$ being ample, the function $$a=(a_1,\dots, a_r)\in\Q^{r}\mapsto \frac{\vol_\chi(a_1\ovl D_1+\cdots+a_r\ovl D_r)}{\mathrm{vol}(a_1 D_1+\cdots+a_r D_r)}$$
    is concave.
\end{theo}
\begin{proof}
    Let $a=(a_1,\dots, a_r),b=(b_1,\dots, b_r)\in\Q^{r}$ and $\lambda\in (0,1)\cap \Q$.
    We want to show that 
    \begin{align*}
        \frac{\vol_\chi(\sum (\lambda a_i+(1-\lambda)b_i)\ovl D)}{\mathrm{vol}(\sum(\lambda a_i+(1-\lambda)b_i)D)}&\geq \lambda\frac{\vol_\chi(\sum a_i\ovl D)}{\mathrm{vol}(\sum a_i D)}+(1-\lambda)\frac{\vol_\chi(\sum b_i\ovl D)}{\mathrm{vol}(\sum b_i D)}.\\
        &=\frac{\vol_\chi(\sum \lambda a_i\ovl D)}{\mathrm{vol}(\sum \lambda a_i D)}+\frac{\vol_\chi(\sum (1-\lambda )b_i\ovl D)}{\mathrm{vol}(\sum (1-\lambda)b_i D)}.
    \end{align*}
    By homogeneity, we may assume that $D_1,\dots, D_r$ are very ample, and $\lambda a_i,(1-\lambda) b_i$ are integers. Then we are done by Proposition \ref{prop_concav} and the additivity of Okounkov bodies for curves and toric varieties\cite[Theorem 3.1]{kaveh2011note}.
\end{proof}

\subsection{Continuity of $\chi$-volume}
\begin{lemm}\label{lem_conti_ext}
Let $\ovl D_1=(D_1,g_1),\dots,\ovl D_r=(D_r,g_r)$ be adelic Cartier divisors on $X$. We denote by \[
\begin{cases}
C_\R(\ovl D_1,\dots,\ovl D_r):=\ovl D_1 \cdot\R_{\geq 0}+\cdots+\ovl D_r\cdot \R_{\geq 0},\\
C_\Q(\ovl D_1,\dots,\ovl D_r):=\ovl D_1 \cdot\Q_{\geq 0}+\cdots+\ovl D_r\cdot \Q_{\geq 0},
\end{cases}
\]
the polyhedral cones generated by $\ovl D_1,\dots, \ovl D_r$ over $\R_{\geq 0}$ and $\Q_{\geq 0}$ respectively.
Assume that there exists a constant function $\lambda(\cdot)$ on the cone $C_\R(\ovl D_1,\dots,\ovl D_r)$
such that $$\mumin^{\sup}(\ovl D)> \lambda(\ovl D)$$ for any $\ovl D\in C_\Q(\ovl D_1,\dots,\ovl D_r)$. Then there exists a continuous function $\vol_{I}(\cdot)$ on $C_\R(\ovl D_1,\dots,\ovl D_r)$ such that
$$\vol_I(\ovl D)=\vol_{\chi}(\ovl D)$$ for any $\ovl D\in C_\Q(\ovl D_1,\dots,\ovl D_r)$. 
\end{lemm}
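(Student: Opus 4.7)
The plan is to rewrite $\vol_\chi$ on the rational cone $C_\Q(\ovl D_1,\dots,\ovl D_r)$ as a formula involving only $\vol(\cdot)$, which is already continuous on adelic $\R$-Cartier divisors by \cite[Theorem 6.4.24]{adelic}, and then take that formula as the definition of $\vol_I$ on all of $C_\R(\ovl D_1,\dots,\ovl D_r)$. Concretely, since $\lambda$ is constant, say with value $\lambda_0\in\R$, I would first pick a real number $C>-\lambda_0$ and use condition (iv) on $(\Omega,\mathcal A,\nu)$ to construct a $\nu$-integrable function $h$ on $\Omega$ with $\int_\Omega h\,\nu(d\omega)=C$; for instance $h:=\frac{C}{\nu(\Omega')}\mathds{1}_{\Omega'}$ for a measurable $\Omega'$ with $0<\nu(\Omega')<+\infty$.

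For any $\ovl D\in C_\Q(\ovl D_1,\dots,\ovl D_r)$, the shift formula of Proposition \ref{prop_vol_chi}(i), extended to adelic $\Q$-divisors via the homogeneity proposition proved just before this lemma, gives
\[
\mumin^{\sup}(\ovl D+(0,h))=\mumin^{\sup}(\ovl D)+C>\lambda_0+C>0,
\]
\[
\vol_\chi(\ovl D+(0,h))=\vol_\chi(\ovl D)+(d+1)\mathrm{vol}(D)\cdot C.
\]
I would then establish the identity $\vol_\chi(\ovl D)=\vol(\ovl D+(0,h))-(d+1)\mathrm{vol}(D)\cdot C$ on $C_\Q$. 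When $\mathrm{vol}(D)>0$, Proposition \ref{prop_Okounkov}(4) applied to $\ovl D+(0,h)$ yields $\vol_\chi(\ovl D+(0,h))=\vol(\ovl D+(0,h))$, and combining with the shift identity gives the claim. When $\mathrm{vol}(D)=0$, both sides vanish: the right-hand side because $\vol(\ovl D+(0,h))\leq(d+1)\mumax^{\mathrm{asy}}(\ovl D+(0,h))\mathrm{vol}(D)=0$ by the estimate in Lemma \ref{lemm_pos_notbig}; and the left-hand side because $\vol_\chi(\ovl D)\leq\vol(\ovl D)=0$ always, while along a subsequence $n_k$ realizing $\mumin^{\sup}(\ovl D)\in\R$, the bound $\sum_i\widehat\mu_i(\pi_*(n_k\ovl D))\geq h^0(n_k D)\,\mumin(\pi_*(n_k\ovl D))$ combined with $h^0(n_k D)=o(n_k^d)$ forces $\vol_\chi(\ovl D)\geq 0$.

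Finally, I would define $\vol_I:C_\R(\ovl D_1,\dots,\ovl D_r)\to\R$ by $\vol_I(\ovl D):=\vol(\ovl D+(0,h))-(d+1)\mathrm{vol}(D)\cdot C$. Continuity of $\vol_I$ is immediate from continuity of $\vol$ on adelic $\R$-Cartier divisors together with continuity of the geometric volume $D\mapsto\mathrm{vol}(D)$ on $\R$-Cartier divisors, and the previous paragraph shows $\vol_I=\vol_\chi$ on $C_\Q$. The main obstacle is the boundary case $\mathrm{vol}(D)=0$, where Proposition \ref{prop_Okounkov}(4) is unavailable; this is overcome by the direct asymptotic sandwich above, which crucially uses the hypothesis-given finiteness of $\mumin^{\sup}(\ovl D)$ and the finiteness of $\mumax^{\mathrm{asy}}(\ovl D)$ together with the vanishing of $h^0(nD)/n^d$.
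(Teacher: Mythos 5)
Your proposal is correct and follows essentially the same route as the paper: shift $\ovl D$ by a multiple of a trivial adelic divisor $(0,h)$ so that $\mumin^{\sup}$ becomes positive, identify $\vol_\chi$ with $\vol$ in that regime via Proposition \ref{prop_Okounkov}(4), and define $\vol_I$ by the resulting formula, whose continuity comes from that of $\vol(\cdot)$ and $\mathrm{vol}(\cdot)$. The only difference is that you explicitly handle the boundary case $\mathrm{vol}(D)=0$, where Proposition \ref{prop_Okounkov}(4) does not apply; the paper's proof passes over this point silently, so your sandwich argument there is a genuine (and correct) addition rather than a deviation.
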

\begin{proof}
We take a $\nu$-integrable function $f$ such that $\displaystyle\int_{\Omega} h\nu(\omega)=1.$ We set that $$\vol_I(\ovl D):=\vol(\ovl D-\lambda(\ovl D)(0,h))+(d+1)\mathrm{vol}(D)\lambda(\ovl D).$$
This is obviously a continuous function due to the continuity of $\vol(\cdot)$, $\mathrm{vol}(\cdot)$ and $\lambda(\cdot)$.
Moreover, for any $\ovl D\in C_\Q(\ovl D_1,\dots,\ovl D_r)$, since $$\mumin^{\sup}(\ovl D-\lambda(\ovl D)(0,h))=\mumin^{\sup}(\ovl D)-\lambda(\ovl D)>0,$$
we have $$\vol(\ovl D-\lambda(\ovl D)(0,h))=\vol_\chi(\ovl D-\lambda(\ovl D)(0,h))=\vol_\chi(\ovl D)-(d+1)\mathrm{vol}(D)\lambda(\ovl D)$$, we thus conclude the proof.
\end{proof}

\begin{theo}\label{theo_chi_semiample_con}
Let $\overline D_1=(D_1,g_1),\dots, \overline D_r=(D_r,g_r)$ be adelic $\Q$-Cartier divisors on $X$ such that $D_i$ are semiample. Then $\vol_\chi(\cdot)$ is continuous in $C_\Q(\ovl D_1, \dots, \ovl D_r)$. Moreover $\vol_\chi(\cdot)$ can be continuously extended to a function on $C_\R(\ovl D_1, \dots, \ovl D_r)$.
\end{theo}
\begin{proof}
Due to the homogeneity, we may assume that all $D_i$ are integral Cartier divisors. 
Hence there exists constants $S$ and $T$ depending on $\overline D
_1,\dots,\overline
D_r$, such that
$$\mumin(H^0(X,\sum_{i=1}^r n_i D_i),\xi_{\sum_{i=1}^r n_i g_i})\geqslant T+S\sum_{i=1}^{r}n_i{,}$$
where $n_i\in \N$.
We write that $a_i=p_i/q_i$ where $p_i$ and $q_i$ are coprime integers and $q_i>0$. Let $q=\prod_{i=1}^r q_i$. Then it holds that
$$\mumin(H^0(X,mq(\sum_{i=1}^r a_i D_i)),\xi_{mq(\sum_{i=1}^r a_i g_i)})\geqslant T+Smq(\sum_{i=1}^r a_i)$$
for every $m\in\N$.
Therefore
$$\mumin^{\sup}(\sum_{i=1}^r a_i \overline D_i)\geqslant S(\sum_{i=1}^r a_i).$$
We then conclude the proof by using Lemma \ref{lem_conti_ext}.
\end{proof}

%%%%%%%%%%%%%%%%%%%%%%%%%%%%%%%%%%%%%%%%%%%%%%%%%%%%%%%%%%%%%%%%%%%%%%%%%%%%%%%%%%%%%%%%%%%%%%%%%%%%%%%%%%%%%%%%%%%%%%%%
%%%%%%%%%%%%%%%%%%%%%%%%%%%%%%%%%%%%%%%%%%%%%%%%%
\bibliography{mybibliography}%This it the name of the file where you put all your entries
\bibliographystyle{smfplain}
\end{document}